\documentclass[12pt,leqno]{article}

\usepackage{palatino,mathrsfs}
\usepackage[mathcal]{euler}

\usepackage{xcolor}

\usepackage{graphicx} 
\usepackage{epstopdf,epsfig}

\usepackage{comment} 
\usepackage{amsmath,amsthm,amsfonts,amssymb,latexsym,amscd,enumerate,url,hyperref}
\usepackage{amssymb}
\usepackage{url}
\usepackage{graphicx}
\usepackage[all,knot]{xy}
\xyoption{arc}

\usepackage{chngcntr}
 \counterwithin*{equation}{section}
  \counterwithin{equation}{section}

\usepackage{multirow}


\theoremstyle{plain}
\newtheorem{theorem}{Theorem}[section] 
 
\newtheorem{lemma}[theorem]{Lemma} 
\newtheorem{proposition}[theorem]{Proposition} 

\newtheorem*{theorem-non}{Theorem}

\theoremstyle{definition}
\newtheorem{defn}[theorem]{Definition} 
\newtheorem{example}[theorem]{Example}

\newtheorem{remark}[theorem]{Remark} 



\def\R{\mathbb R}
\def\C{\mathbb C}

\def\P{\mathbb P}

\def\R{\mathbb R}
\def\C{\mathbb C}

\def\P{\mathbb P}

\def\BG{\boldsymbol{G}}
\def\g{\boldsymbol{\mathfrak{g}}}

\def\fg{\mathfrak{g}}

\usepackage{comment} 

\begin{document}

\title{Algebraic Families of  Groups and Commuting Involutions}
%
\author{Dan Barbasch\thanks{Department of Mathematics, Cornell University, Ithaca, NY 14850, USA.} , \ Nigel Higson\thanks{Department of Mathematics, Penn State University, University Park, PA 16802, USA.} \ and Eyal Subag\footnotemark[2]}

\date{}

\maketitle


\begin{abstract}
\noindent Let $G$ be a complex affine algebraic group, and let $\sigma_1$ and $\sigma_2$ be commuting anti-holomorphic involutions of $G$. 
We construct  an algebraic family of algebraic groups  over the complex projective line and a real structure on the family   that interpolates between the real forms $G^{\sigma_1}$ and $G^{\sigma_2}$.
 \end{abstract}

\section{Introduction}

The purpose of  this note is to construct  real forms of one-parameter  
  algebraic  families  of   complex affine algebraic
  groups.   The notion of an algebraic
family is as in \cite{Ber2016,Ber2017}, and our results generalize the
examples introduced in \cite[Sec. 3]{Ber2016}.  The examples of most interest concern  complex semisimple  groups.  Our construction  shows that one can smoothly, and indeed   algebraically, interpolate between various real forms, whether or not they are in the same inner class.

Let  $G$ be a  complex affine algebraic group.  If 
$\sigma$ 
is an antiholomorphic involution of $G$, then we shall define, as usual, $G^\sigma$ to be the fixed subgroup
\[
G^\sigma = \{\, g\in G : \sigma (g) = g\,\} ,
\]
which is a closed Lie subgroup of $G$.
Passing to Lie algebras,  define the real Lie subalgebra $\fg^{\sigma}\subseteq \fg$ similarly, and define
\[
\fg^{-\sigma} = \{\, X \in \fg : \sigma(X)  = -X\,\}.
\]
We shall prove the following result:

\begin{theorem-non}[\textbf{\ref{thm-two-real-forms}}] 
Let $G$ be a complex affine algebraic group and let $\sigma_1$ and $ \sigma_2$ be two commuting antiholomorphic involutions of  $G$.
  There exists an algebraic family  $\boldsymbol G$ of  affine algebraic groups    over $\C\P^1$, and an antiholomorphic involution $\boldsymbol{\sigma}$ of the family $\boldsymbol G$    that is compatible with the standard real structure on $\C\P^1$, 
  such that if   $[\alpha:\beta]\in \R\P^1$, then 
\begin{eqnarray}\nonumber
&&{\boldsymbol G}^{\boldsymbol{\sigma}}|_{[\alpha:\beta]}\cong \begin{cases}
G^{\sigma_1}  & \alpha\beta> 0\\
   \left ( G^{\sigma_1}\cap G^{\sigma_2}\right )   \ltimes   \left ( \fg^{\sigma_1} \cap \fg^{-\sigma_2}  \right )  &  \alpha\beta=0 \\
G^{\sigma_2}  & \alpha\beta< 0 .
\end{cases}
\end{eqnarray}
\end{theorem-non}

See \cite[Section 2]{Ber2016}  and  Section~\ref{sec-group-families} below for the concepts of algebraic family and antiholomorphic involution that we are using here.  

Since  the involutions $\sigma_1$ and $\sigma_2$ in the theorem commute,   $\sigma_2$ restricts to an involution of  $\fg^{\sigma_1}$ and   $\sigma_1$ restricts to an involution of  $\fg^{\sigma_2}$.  Hence 
\[
\begin{aligned}
 \fg^{\sigma_1} & = \left (  \fg^{\sigma_1} \cap \fg^{\sigma_2} \right ) \oplus  
\left (  \fg^{\sigma_1} \cap \fg^{-\sigma_2} \right ) \\
 \fg^{\sigma_2} & = \left (  \fg^{\sigma_1} \cap \fg^{\sigma_2} \right ) \oplus 
\left (  \fg^{-\sigma_1} \cap \fg^{\sigma_2} \right ) .\\
\end{aligned}
\]
Moreover  
\begin{equation*}
\sqrt{-1}\cdot 
\left (  \fg^{\sigma_1} \cap \fg^{-\sigma_2} \right ) = 
\left (  \fg^{-\sigma_2} \cap \fg^{\sigma_2} \right ) .
\end{equation*}
In  the context of Riemannian symmetric spaces the algebras $\fg^{\sigma_1}$ and $\fg^{\sigma_2}$ are  called  \emph{dual real forms} of $\fg$; see  \cite[Section V.2]{Hel}. A  more general concept
  of symmetric space, appropriate to the study of involutions on general groups, is considered in   \cite{Loos}.

It is   easy to find instances where the hypotheses of Theorem~\ref{thm-two-real-forms} are satisfied.  For instance a   theorem of E.\ Cartan asserts that any   antiholomorphic involution  of a connected complex semisimple group commutes with an antiholomorphic involution  whose fixed group is a maximal compact subgroup (see  for example   \cite[Theorem 5.1.3]{OnishchikVinberg}).  Other examples are the antiholomorphic involutions defining the real forms $SU(p,n{-}p)$ of $SL(n,\C)$.

The relation between complex-linear involutions and conjugate-linear involutions of a semisimple complex Lie
  algebra is well known, and the classification of either  is also well known. The question of when two
conjugacy classes of involutions have representatives that commute is studied in \cite{HelmSchw} and \cite{Helm} (the answer depends on the precise notion of conjugacy that one uses).  In the case of groups, as opposed to Lie algebras, the isogeny class also plays a role. See for example \cite[Example 16.4]{VoganDuality} for the case of $D_4$, and see \cite{ABV} for a discussion of additional subtleties in the definition of a real form that are of importance in representation theory.

Theorem  \ref{thm-two-real-forms}   extends to all group cases a construction made  at the Lie algebra level in   \cite{Ber2016} and studied in some group cases there.  The focus of \cite{Ber2016} was on the case where $G^{\sigma_2}$ was a compact form of $G$.  In that case $G^{\sigma_1} \cap G^{\sigma_2}$ is  a maximal compact subgroup of $G^{\sigma_1}$ and it is natural to study algebraic families of Harish-Chandra modules associated to the algebraic families of groups given by Theorem~\ref{thm-two-real-forms}. A start on this was made in  \cite{Ber2016,Ber2017,Subag17}, where connections were made to the  ``Mackey bijection'' between irreducible representations of $G^{\sigma_1}$ and those of its Cartan motion group \cite{Mackey75,Higson08,Higson11,Afgoustidis15}, as well as to   contraction families of representations studied in mathematical physics \cite{Inonu-Wigner53,Dooley83,Dooley85}.

The cases where $G^{\sigma_1} \cap G^{\sigma_2}$ is not  maximal compact present interesting new challenges.
In representation theory     one is often lead to consider at the same time different real forms of a single complex algebraic  group. This occurs  in the context   Vogan's duality; see  e.g., \cite[Conjecture 4.15]{Vogan93}, \cite[Theorem 1.18]{ABV}, and  \cite{VoganDuality}. More recently  Bernstein has voiced the idea of  grouping together various real forms in order to define the  ``correct'' category of representations for real groups; see   \cite{Bernstein14}.  See also the work of Flensted-Jensen \cite{Flensted-Jensen}, where  spherical functions on dual  real forms are   related to one another.   It remains an   open problem to develop a concept of families of modules  that might be   appropriate and useful to these situations.  Perhaps the concept of Schwartz space from \cite{AizenbudGourevitch08} and the associated idea of SF-module \cite{BernsteinKroetz2014} might be relevant here.

We note that  degenerations of symmetric pairs  over algebraically closed fields and related matters   were considered before e.g., see \cite{Panyushev} and references therein. 

The   authors thank Jeffrey Adams and Joseph Bernstein for very helpful discussions.    Dan Barbasch was supported by an NSA grant.

\section{An algebraic family of   Lie algebras}
\label{sec-algebra-families}
 An  \emph{algebraic family   of complex Lie algebras} over a  complex variety $X$ is  a locally free sheaf $\g$  of $O_{X}$-modules that is equipped  with $O_{X}$-linear Lie bracket  
 \[
 [\,\_\,,\_\,]:\g \times_{O_X} \g \longrightarrow \g .
 \]
See   \cite{Ber2016}.    Now let   $\fg $ be  a finite-dimensional complex   Lie algebra and let 
 \[
 \theta \colon \fg \longrightarrow \fg
 \]
be a complex-linear  involution of $\fg$.  In \cite{Ber2016} a nontrivial  algebraic family of complex Lie algebras was constructed using the pair $(\fg, \theta)$. The purpose of this section is to give a second version of the construction that   will be well suited to a later adaptation   from Lie algebras to groups.

 To start, fix a complex Lie algebra embedding  
\[
\iota:\fg\hookrightarrow  \mathfrak{sl}(n,\C).
\]  
Then the formula 
\begin{equation}
\label{eq-lie-alg-2n-embedding}
X \longmapsto \frac 12 
\begin{bmatrix} 
\iota(X) + \iota(\theta (X)) & \iota(X) - \iota(\theta (X)) \\
\iota(X) - \iota(\theta (X)) & \iota(X) + \iota(\theta (X)) 
\end{bmatrix}
\end{equation}
defines a  Lie algebra embedding of $\fg$ into $\mathfrak{sl}(2n,\C)$. The image is the Lie subalgebra
\[
\fg_{[1:1]}  = \left\{ \,  \begin{bmatrix}
X &  Y \\
  Y&X
\end{bmatrix} \in \mathfrak{sl}(2n,\C) \,:\,  X\in \iota(\fg^{\theta} ),Y\in \iota(\fg^{-\theta} ) 
\,\right\} .
 \]
Now if  $[\alpha {:} \beta ]$ is a point of $\C\P^1$ (with homogeneous coordinates $\alpha$ and $\beta$), then the complex vector space 
\[
\fg_{[\alpha:\beta]} = \left\{ \,  \begin{bmatrix}
X &  \alpha Y \\
  \beta Y&X
\end{bmatrix} \in \mathfrak{sl}(2n,\C) \,:\,  X\in \iota(\fg^{\theta} ),Y\in \iota(\fg^{-\theta} ) 
\,\right\}
 \]
 is a Lie subalgebra of $\mathfrak{sl}(2n,\C)$ that depends only on the point $[\alpha{:}\beta]$. 
 
 The subalgebras $\fg_{[\alpha:\beta]} $ are the fibers of an algebraic vector subbundle of the trivial bundle over $\C\P^1$ with fiber $\mathfrak{sl}(2n,\C)$.   The sheaf of sections $\g$ of this algebraic vector bundle is an algebraic family of Lie algebras over $\C\P^1$ with fibers $\g |_{[\alpha:\beta]} = \fg_{[\alpha:\beta]}$.   
 
 In \cite{Ber2016} the same family was constructed in a different way, as follows.  Start with the decomposition
\begin{equation}
\label{eq-family-cartan-decomp}
O_{\C} \otimes_{\C} \fg = O_{\C} \!\otimes_{\C}\! \fg^{\theta} \, \oplus \, O_{\C}\! \otimes_{\C} \!\fg^{-\theta}
\end{equation}
and  define a Lie bracket operation on sections belonging to   individual summands by 
\[
[\eta, \zeta](z) =  \begin{cases}
z \cdot [\eta(z), \zeta(z)]_{\mathfrak g}   & \text{if $\zeta$  and $\eta$ are sections of $ O_X\! \otimes_{\C} \!\fg^{-\theta}$}   \\
\phantom{i} [\eta(z), \zeta(z)]_{\mathfrak g}  & \text{otherwise}.
\end{cases}
\]
We obtain an algebraic family of Lie algebras over $\C$.

Setting $z= \alpha/\beta$, we obtain an algebraic family of Lie algebras over  the complement of $[1{:}0]$ in $\C\P^1$, while setting $z = \beta / \alpha$ we obtain   an algebraic family over the complement of $[0{:}1]$ in $\mathbb{CP} ^1$.   The formula 
\[
\eta  \longmapsto 
\begin{cases}
[\alpha{:}\beta]\mapsto 
\frac {\alpha}{\beta} \cdot \eta(\frac {\alpha}{\beta})  & \text{if   $\eta $ is a section of $ O_X\! \otimes_{\C}\, \!\fg^{-\theta}$}   \\
[\alpha{:}\beta]\mapsto \phantom{i} \eta(\frac {\alpha}{\beta})  & \text{if   $\eta $ is a section of $ O_X\! \otimes_{\C}\, \!\fg^{\theta}$}
\end{cases}
\]
defines an isomorphism from the family over $\mathbb{CP} ^1 \setminus \{ [1{:}0]\}$ to the family over   $\mathbb{CP} ^1 \setminus \{ [0{:}1]\}$, on the overlap where both families are defined, and with this identification we obtain the algebraic family of Lie algebras  on $\C\P^1$ from \cite{Ber2016}.

The formula 
\[
\eta \longmapsto
\begin{cases}
[\alpha{:}\beta] \mapsto \left [ \begin{smallmatrix}
\eta (\frac \alpha \beta) &  0 \\ 0 & \eta (\frac \alpha \beta)
\end{smallmatrix}\right ]_{\vphantom{\int}}
 & \text{if  $\eta\in O_\C \otimes_\C \fg^{\theta}$ } \\
[\alpha{:}\beta] \mapsto  \left [ \begin{smallmatrix}
0&\frac \alpha \beta    \eta (\frac \alpha \beta)   \\   \eta (\frac \alpha \beta) & 0
\end{smallmatrix}\right ]
 &\text{if  $\eta\in O_\C \otimes_\C \fg^{-\theta}$  } \\
\end{cases}
\]
defines an isomorphism over $\C\P^1 \setminus \{ [1{:} 0]\}$  from the family of \cite{Ber2016} to the family we have constructed in this section that extends to an isomorphism over all of $\C\P^1$ by a similar formula.

Of course this identification shows that the construction of $\g$  in this section is independent of the choice of embedding of $\fg$  into $\mathfrak{sl}(n,\C)$.  From either point of view, it  is easy to see that 
\[
\g\vert _{[\alpha:\beta]}\cong 
\begin{cases}
\fg &  \alpha\beta\neq 0\\
\fg^{\theta} \ltimes \fg^{-\theta}&  \alpha\beta=0  .
\end{cases}
\]

Conjugation by the matrix $\left [\begin{smallmatrix} -I & 0 \\ 0 & I\end{smallmatrix}\right ] $
defines an involution $\boldsymbol \theta$ of the family   $\g $ constructed in this section, and   we obtain  a decomposition 
\[
\g=\g^{\boldsymbol{\theta}} \oplus \g ^{-\boldsymbol{\theta}}
\]
as $O_{\C\P^1}$-modules (over $\C\subseteq \C\P^1$ this is of course the decomposition \eqref{eq-family-cartan-decomp}). 
The family $\g$ is nontrivial even as a sheaf of $O_{\C\P^1}$-modules, since the sheaf  $\g^{\boldsymbol{\theta}}$ is isomorphic  to $O_{\C\P^1}\otimes_{\C} \fg^{\theta}$, but     $\g^{-\theta}$ is isomorphic to $O_{\C\P^1}(-1)\otimes_{\C}\fg^{-\theta}$.

\section{Real structure on the family of Lie algebras }
\label{sec-real-structure-algebras}
 
 If $X$ is a complex algebraic variety, then as usual we shall denote by $\overline X$ the complex conjugate variety. It has the same underlying topological space  as $X$, but  a complex-valued function defined on some open set of $\overline X$  is regular if and only if the complex conjugate function is regular on $X$.  A morphism $X\to \overline Y$ can also be viewed as a morphism $\overline X \to Y$.  So we can speak of an involutive morphism $\sigma\colon X\to \overline X$, and this is by definition a \emph{real structure} on $X$, or (conventionally but a little misleadingly) an \emph{antiholomorphic involution} of $X$.

To each sheaf $S$ of $O_X$-modules there is an obvious associated \emph{complex conjugate} sheaf $\overline S$ of $O_{\overline X}$-modules. Given a real structure $\sigma$ on $X$, a  \emph{real structure} on  the sheaf is an involutive  morphism $S\to \sigma^* \overline S$.  This concept specializes to algebraic families of complex Lie algebras in the obvious way \cite[Sec. 2.5]{Ber2016}.

Let $\g$ be an algebraic family of Lie algebras over $X$, equipped with a real structure.  
We shall be interested in the fixed set $X^\sigma\subseteq X$ (in general it could be empty) and in  the fibers of the fixed sheaf $\g^\sigma$ over $X^\sigma$.  If $X$ is nonsingular, then $X^\sigma$ is a smooth submanifold of $X$, and the fibers of $\g^\sigma$ constitute the fibers of a smooth vector bundle over $X^\sigma$, with smoothly varying Lie algebra structures.

We shall be concerned throughout the note with   the \emph{standard} real structure on  $X=\C\P^1$ given by  $ [\alpha{:}\beta]\mapsto [\overline{\alpha}{:}\overline{\beta}]$.   Our aim is to construct a real structure on the family $\g$ constructed in the previous section, and to prove the following result:

 \begin{theorem}
 \label{thm-real-fmly-algebras1}
Let $\fg$ be a finite-dimensional complex Lie algebra, let $\theta$ be a complex-linear involution of $\fg$, and let $\g$ be the algebraic family of Lie algebras over $\C\P^1$ from the previous section associated to $\fg$ and $\theta$.  If $\sigma $ is any conjugate-linear involution of $\fg$ that commutes with $\theta$, then there is an associated  real structure $\boldsymbol \sigma$  on $\g$  whose  fixed fibers over the fixed space $\R\P^1$ for the standard real structure on $\C\P^1$ have the following isomorphism types:
\begin{eqnarray}\nonumber
&&\g ^{\sigma}|_{[\alpha:\beta]}\cong 
 \begin{cases}
\fg^{\sigma} & \alpha\beta> 0\\
 \left (\fg^{\sigma}\cap \fg^{\theta}\right ) \ltimes \left ( \fg^{\sigma}\cap\fg^{-\theta}\right )&  \alpha\beta=0 \\
 {\fg}^{\sigma\theta}  & \alpha\beta< 0 .
\end{cases}
\end{eqnarray}
\end{theorem}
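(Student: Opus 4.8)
The plan is to construct $\boldsymbol\sigma$ directly on the matrix model of $\g$ from Section~\ref{sec-algebra-families}, by applying $\sigma$ entry by entry and conjugating the homogeneous coordinates. Because $\sigma$ commutes with $\theta$ it carries $\fg^{\theta}$ into $\fg^{\theta}$ and $\fg^{-\theta}$ into $\fg^{-\theta}$; writing $\sigma$ also for the conjugate--linear involution it induces on $\iota(\fg^{\theta})$ and on $\iota(\fg^{-\theta})$, we may set, for each $[\alpha{:}\beta]\in\C\P^1$,
\[
\boldsymbol\sigma\colon\fg_{[\alpha:\beta]}\longrightarrow\fg_{[\bar\alpha:\bar\beta]},\qquad
\begin{bmatrix}X & \alpha Y\\ \beta Y & X\end{bmatrix}
\longmapsto
\begin{bmatrix}\sigma X & \bar\alpha\,\sigma Y\\ \bar\beta\,\sigma Y & \sigma X\end{bmatrix}
\qquad(X\in\iota(\fg^{\theta}),\ Y\in\iota(\fg^{-\theta})).
\]
First I would check that this defines a real structure on the family $\g$ compatible with the standard real structure on $\C\P^1$, in the sense recalled in Section~\ref{sec-real-structure-algebras}: it is manifestly conjugate--linear, it covers $[\alpha{:}\beta]\mapsto[\bar\alpha{:}\bar\beta]$, and it is involutive because $\sigma^{2}=\mathrm{id}$ and $\overline{\bar\alpha}=\alpha$. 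The formula is uniform in the homogeneous coordinates, so it patches over the two standard charts and restricts to the special fibers over $\alpha\beta=0$, and once composed with coordinatewise conjugation it is polynomial, so it is a morphism of sheaves of the required type. That $\boldsymbol\sigma$ respects the fiberwise bracket reduces, after expanding the matrix commutators exactly as in Section~\ref{sec-algebra-families}, to $\sigma$ being a Lie algebra homomorphism of $\fg$ together with the observation that conjugating $\alpha$ and $\beta$ matches the conjugation of the scalar $\alpha\beta$ that multiplies the $\fg^{-\theta}$--$\fg^{-\theta}$ bracket in the fiber.

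It then remains to identify the fixed fibers over $\R\P^1$. Fix $[\alpha{:}\beta]\in\R\P^1$ and normalize $\alpha,\beta\in\R$. For $\alpha\beta\neq 0$ one verifies directly, as in Section~\ref{sec-algebra-families}, that
\[
\psi_{[\alpha:\beta]}\colon\fg\xrightarrow{\ \sim\ }\fg_{[\alpha:\beta]},\qquad
X+Y\longmapsto\begin{bmatrix}\iota X & \sqrt{\alpha/\beta}\;\iota Y\\ \sqrt{\beta/\alpha}\;\iota Y & \iota X\end{bmatrix}
\qquad(X\in\fg^{\theta},\ Y\in\fg^{-\theta})
\]
is a Lie algebra isomorphism, for any compatible pair of square roots with $\sqrt{\alpha/\beta}\cdot\sqrt{\beta/\alpha}=1$ (the only nonobvious bracket to check, the $\fg^{-\theta}$--$\fg^{-\theta}$ one, amounts to $\alpha\beta\cdot(\alpha\beta)^{-1}=1$). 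I would then compute the conjugate--linear involution $\psi_{[\alpha:\beta]}^{-1}\circ\boldsymbol\sigma\circ\psi_{[\alpha:\beta]}$ of $\fg$. If $\alpha\beta>0$, the square roots may be taken to be positive reals and this involution is simply $\sigma$, so $\g^{\sigma}|_{[\alpha:\beta]}\cong\fg^{\sigma}$. If $\alpha\beta<0$, the square roots are purely imaginary, which inserts a sign on the $\fg^{-\theta}$--part; the transported involution is then $\sigma\theta$ (again conjugate--linear and an involution, since $\sigma$ and $\theta$ commute), so $\g^{\sigma}|_{[\alpha:\beta]}\cong\fg^{\sigma\theta}$. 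Finally, for $\alpha\beta=0$, say $[\alpha{:}\beta]=[0{:}1]$, one reads off from the matrix model that $\fg_{[0:1]}\cong\fg^{\theta}\ltimes\fg^{-\theta}$ and that the fixed points of $\boldsymbol\sigma$ are exactly the matrices $\left[\begin{smallmatrix}X & 0\\ Y & X\end{smallmatrix}\right]$ with $X\in\iota(\fg^{\sigma}\cap\fg^{\theta})$ and $Y\in\iota(\fg^{\sigma}\cap\fg^{-\theta})$, so $\g^{\sigma}|_{[0:1]}\cong(\fg^{\sigma}\cap\fg^{\theta})\ltimes(\fg^{\sigma}\cap\fg^{-\theta})$; the fiber over $[1{:}0]$ is handled identically after interchanging the two diagonal blocks.

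The step that needs the most care — and the actual source of the trichotomy — is the tension between algebraicity and the square roots. The map $\boldsymbol\sigma$ must be algebraic, hence defined with no square roots (entrywise $\sigma$ plus coordinate conjugation), whereas the fiberwise trivializations $\psi_{[\alpha:\beta]}$ identifying $\fg_{[\alpha:\beta]}$ with $\fg$ unavoidably involve $\sqrt{\alpha/\beta}$. Whether this square root can be chosen real or must be purely imaginary is dictated precisely by the sign of $\alpha\beta$, and that is exactly what forces the transported involution to jump from $\sigma$ to $\sigma\theta$. Everything else — that $\boldsymbol\sigma$ is a genuine real structure on the sheaf, that it patches across the standard charts, and that it restricts correctly to the two special fibers over $\alpha\beta=0$ — is routine given the construction of $\g$ in Section~\ref{sec-algebra-families}.
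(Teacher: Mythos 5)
Your proposal is correct and follows essentially the same route as the paper: the real structure is the blockwise application of $\sigma$ combined with conjugation of the homogeneous coordinates, and the trichotomy of fixed fibers comes from conjugating/trivializing the fiber $\fg_{[\alpha:\beta]}$ by a square root of $\beta/\alpha$, which is real when $\alpha\beta>0$ (giving $\fg^{\sigma}$) and purely imaginary when $\alpha\beta<0$ (giving $\fg^{\sigma\theta}$ via $i\cdot(\fg^{-\theta}\cap\fg^{\sigma})=\fg^{-\theta}\cap\fg^{-\sigma}$). The only cosmetic difference is that the paper first chooses, via Lemma~\ref{lem-can-form-comm-inv1}, an embedding in which $\sigma$ is implemented as $X\mapsto S\overline{X}S$, so that the real structure visibly extends to the ambient constant family with fiber $\mathfrak{sl}(2n,\C)$, whereas you argue algebraicity of the map directly on the subfamily.
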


To that end, fix a conjugate-linear involution 
$\sigma$ 
of the finite-dimensional Lie algebra $\fg$, and assume  that  $\sigma$ {commutes} with the given complex-linear involution $\theta$.  
 
 Given $\sigma$, it  is very easy to define a real structure on $\g$ using the description of our family from \cite{Ber2016}: just use  the formula
 \[
 {\boldsymbol\sigma}  (\eta)\left (\strut [\alpha{:}\beta]\right ) = \sigma \left ( \eta( [\overline \alpha{:}\overline \beta])\right ) ,
 \]
 where $\eta$ is a section of the sheaf \eqref{eq-family-cartan-decomp}.
 But it will be convenient to translate this into a formula that uses the matrix construction of $\g$.  For this purpose it is helpful (but not necessary) to choose the initial embedding 
 \[
 \iota \colon \fg\longrightarrow \mathfrak{sl}(n,\C)
 \]
appropriately.

\begin{lemma}\label{lem-can-form-comm-inv1}
There exists an embedding $\iota $  as above  and a matrix $S\in GL(n,\R)$ such that  $S^2 = I$, and such that the conjugate linear involution 
\[
\tau \colon X \longmapsto  S\overline{X}S 
\]
 on $\mathfrak{sl}(n,\C)$ \textup{(}the overline denotes entrywise complex conjugation\textup{)} satisfies 
 \[
 \iota (\sigma(X)) =  \tau(\iota(X))
 \]
  for every $X \in \mathfrak {g}$.\end{lemma}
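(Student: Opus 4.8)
The plan is to realize the conjugate-linear involution $\sigma$ concretely, as entrywise complex conjugation on matrices, by embedding the real form $\fg^{\sigma}$ into $\mathfrak{sl}(n,\R)$ and then complexifying. Since $\sigma$ is a conjugate-linear involutive automorphism of $\fg$, its fixed set $\fg^{\sigma}$ is a real form: it is a real Lie subalgebra, one has $\fg=\fg^{\sigma}\oplus\sqrt{-1}\,\fg^{\sigma}$ as real vector spaces, and $\sigma$ is the conjugation associated to this real structure. By Ado's theorem the finite-dimensional real Lie algebra $\fg^{\sigma}$ carries a faithful finite-dimensional real representation $\rho\colon\fg^{\sigma}\to\mathfrak{gl}(m,\R)$. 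To land inside the special linear algebra, replace $\rho$ by $X\longmapsto\left[\begin{smallmatrix}\rho(X)&0\\0&-\operatorname{tr}\rho(X)\end{smallmatrix}\right]$, which is still faithful and now takes values in $\mathfrak{sl}(n,\R)$ with $n=m+1$; call this embedding $\iota_{0}\colon\fg^{\sigma}\hookrightarrow\mathfrak{sl}(n,\R)$.

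Next, because $\fg=\fg^{\sigma}\otimes_{\R}\C$, the real-linear map $\iota_{0}$ extends uniquely to a complex-linear Lie algebra embedding $\iota\colon\fg\hookrightarrow\mathfrak{sl}(n,\R)\otimes_{\R}\C=\mathfrak{sl}(n,\C)$, given on $X,Y\in\fg^{\sigma}$ by $\iota(X+\sqrt{-1}\,Y)=\iota_{0}(X)+\sqrt{-1}\,\iota_{0}(Y)$. Take $S=I_{n}$, so that $\tau$ is entrywise complex conjugation on $\mathfrak{sl}(n,\C)$. For $X,Y\in\fg^{\sigma}$ one computes $\iota(\sigma(X+\sqrt{-1}\,Y))=\iota_{0}(X)-\sqrt{-1}\,\iota_{0}(Y)$, and this equals $\overline{\iota_{0}(X)+\sqrt{-1}\,\iota_{0}(Y)}=\tau(\iota(X+\sqrt{-1}\,Y))$ precisely because $\iota_{0}(X)$ and $\iota_{0}(Y)$ are real matrices. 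Hence $\iota(\sigma(X))=\tau(\iota(X))$ for all $X\in\fg$, which is the assertion of the lemma.

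There is no serious obstacle here: the only steps needing a moment's attention are the passage from $\mathfrak{gl}$ to $\mathfrak{sl}$, handled by the trace trick, and the tautological check that entrywise complex conjugation on $\mathfrak{sl}(n,\C)$ restricts, under $\iota$, to $\sigma$. I would nonetheless keep the statement with a general involutive $S\in GL(n,\R)$, although the construction above already yields $S=I$, since it costs nothing and leaves flexibility for later, when it is convenient to choose $\iota$ adapted also to $\theta$ — for instance so that $\theta$ is implemented by conjugation by an order-two real matrix, which can be arranged by running the argument above with $\rho\oplus(\rho\circ\theta)$ in place of $\rho$, the swap of the two summands realizing $\theta$.
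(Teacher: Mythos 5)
Your proof is correct, but it takes a genuinely different route from the paper's. The paper starts from an arbitrary complex embedding $\fg\hookrightarrow\mathfrak{sl}(k,\C)$, doubles it to $X\mapsto\left[\begin{smallmatrix}X&0\\0&\overline{\sigma(X)}\end{smallmatrix}\right]$ (the second block is complex-linear and bracket-preserving because it composes two conjugate-linear homomorphisms), and takes $S$ to be the swap matrix $\left[\begin{smallmatrix}0&I\\I&0\end{smallmatrix}\right]$; you instead realize $\sigma$ as honest entrywise conjugation, by embedding the real form $\fg^{\sigma}$ into $\mathfrak{sl}(n,\R)$ via Ado plus the trace trick and then complexifying along $\fg=\fg^{\sigma}\oplus\sqrt{-1}\,\fg^{\sigma}$, which yields $S=I$. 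Both arguments are complete. What the paper's version buys: it reuses whatever complex embedding is already in hand (in the intended application $\fg$ comes with one, from $G\subseteq SL(n,\C)$); it transfers verbatim to the group setting of Lemma~\ref{lem-can-form-comm-inv2}, where one sends $g\mapsto\left[\begin{smallmatrix}g&0\\0&\overline{\sigma(g)}\end{smallmatrix}\right]$; and it makes the subsequent Remark --- that $\tau$ may be arranged to have fixed algebra conjugate to $\mathfrak{su}(k,k)$ --- immediate. What yours buys: the cleaner normalization $S=I$, so that $\tau$ is plain conjugation with fixed algebra $\mathfrak{sl}(n,\R)$ (giving instead the statement that every real form embeds into $\mathfrak{sl}(n,\R)$), at the cost of invoking Ado for $\fg^{\sigma}$ and of an argument that does not port as directly to groups, where one would need the corresponding embedding theorem for the real algebraic group determined by $(G,\sigma)$ rather than a two-line doubling of a given complex embedding. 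Your closing remark about further adapting $\iota$ to $\theta$ is unnecessary here: the paper handles $\theta$ separately, through the doubled embedding \eqref{eq-lie-alg-2n-embedding}, and the lemma only requires compatibility with $\sigma$.
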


\begin{proof} 
Start with any embedding of $\mathfrak{g}$ into some $\mathfrak{sl}(k,\C)$.  Then set $n=2k$ and embed $\mathfrak g $ into $\mathfrak{sl}(n,\C)$ via the map
\[
X \mapsto 
\begin{bmatrix}
X & 0 \\ 0 & \overline{\sigma(X)}
\end{bmatrix} 
\]
The matrix
\[
S = \begin{bmatrix} 0& I \\ I & 0 \end{bmatrix} \]
has the required property.
\end{proof}

\begin{remark}
The matrix $S$ above is conjugate to $\left [ \begin{smallmatrix} I & 0 \\ 0 & -I \end{smallmatrix} \right ]$ in $GL(n,\R)$.  So the fixed subalgebra of the conjugate-linear involution $\tau$ above is conjugate to $\mathfrak {su} (k,k)\subseteq \mathfrak{sl}(2k,\C)$. 
\end{remark}

Fix an embedding and matrix $S$ as in the lemma.  The formula 
\begin{equation}
\label{eq-2by2-involution}
 \begin{bmatrix} X & Y \\ Z & W 
\end{bmatrix}
\longmapsto
\begin{bmatrix}
S & 0 \\ 0 & S 
\end{bmatrix}
\begin{bmatrix}
\overline X & \overline  Y \\ \overline  Z & \overline  W
\end{bmatrix}
\begin{bmatrix}
S  & 0 \\ 0 & S 
\end{bmatrix}
\end{equation}
defines a conjugate-linear involution of $\mathfrak{sl}(2n,\C)$ that again extends the original conjugate-linear involution on $\fg$, this time under the embedding \eqref{eq-lie-alg-2n-embedding}.  The involution \eqref{eq-2by2-involution} defines a real structure  in the obvious way    on the constant family of Lie algebras over $\C\P^1$ with fiber $\mathfrak{sl}(2n,\C)$. This restricts to a real structure $\boldsymbol \sigma$ on $\g$ since  the involution  \eqref{eq-2by2-involution} maps 
$
\fg _{[\alpha{:}\beta]}$ to $ \fg _{[\overline \alpha{:}\overline\beta]}
$, for all $[\alpha{:}\beta]\in \C\P^1$.  Indeed the action of \eqref{eq-2by2-involution}  on $\fg_{[\alpha{:}\beta]}$ is 
\begin{equation*}
\label{eq-upsilon}
\begin{bmatrix} \iota(X) & \alpha \iota(Y) \\ \beta \iota(Y)  & \iota(X)
\end{bmatrix}
\longmapsto
\begin{bmatrix} \iota(\sigma(X)) & \overline \alpha \iota(\sigma(Y)) \\  \overline \beta  \iota(\sigma(Y)) & \iota(\sigma(X)) 
\end{bmatrix}
\end{equation*}
for all $X\in  \fg^{\theta}$ and all  $Y\in \fg^{-\theta}$.

Suppose now that $[\alpha{:}\beta]\in \R\P^1$, so that we get a conjugate-linear involution
\[
\sigma_{[\alpha{:}\beta]} \colon  
\fg _{[\alpha{:}\beta]}\longrightarrow   \fg _{[  \alpha{:} \beta]} .
\]
  The fixed-point algebra is 
\[
\fg_{[\alpha{:}\beta]}^\sigma
= 
\left\{ \,  
\begin{bmatrix}
X &  \alpha Y \\
  \beta Y&X
\end{bmatrix} \in \mathfrak{gl}_{2n}(\C) 
\,:\,  
X\in \iota(\fg^{\theta} \cap \fg ^{\sigma}),Y\in \iota(\fg^{-\theta} \cap \fg^{\sigma}) 
\,\right\} .
\]

When $\alpha/\beta > 0$,  upon conjugating by $\left [ \begin{smallmatrix} \gamma I  & 0 \\ 0 & I\end{smallmatrix} \right ]$, where $\gamma$ is a  square root of $ \beta/\alpha$, we find that 
\begin{equation}
\label{eq-alpha-beta-pos}
\fg_{[\alpha{:}\beta]}^\sigma
\cong 
\left\{ \,  
\begin{bmatrix}
X &    Y \\
    Y&X
\end{bmatrix} \in \mathfrak{gl}_{2n}(\C) 
\,:\,  
X\in \iota(\fg^{\theta} \cap \fg ^{\sigma}),Y\in \iota(\fg^{-\theta} \cap \fg^{\sigma}) 
\,\right\} ,
\end{equation}
and the right-hand side of \eqref{eq-alpha-beta-pos} is isomorphic to $\fg^{\sigma}$ via the embedding \eqref{eq-lie-alg-2n-embedding}.

When $\alpha/\beta < 0$,  upon conjugating by $\left [ \begin{smallmatrix} i\gamma I & 0 \\ 0 & I \end{smallmatrix} \right ]$, where $\gamma$ is a   square root of $ - \beta/\alpha$, we find that 
\begin{equation}
\label{eq-alpha-beta-neg}
\fg_{[\alpha{:}\beta]}^\sigma
\cong 
\left\{ \,  
\begin{bmatrix}
X &   i Y \\
   i Y&X
\end{bmatrix} \in \mathfrak{gl}_{2n}(\C) 
\,:\,  
X\in \iota(\fg^{\theta} \cap \fg ^{\sigma}),Y\in \iota(\fg^{-\theta} \cap \fg^{\sigma}) 
\,\right\} .
\end{equation}
Note now that 
\[
i\cdot \left ( \fg^{-\theta} \cap \fg^{\sigma}\right ) 
=
 \left ( \fg^{-\theta} \cap \fg^{-\sigma}\right )  .
\]
From this we find that the right-hand side of \eqref{eq-alpha-beta-neg}   is isomorphic to 
\[
\fg^{\theta\sigma} = \left (\fg^{\theta} \cap \fg ^{\sigma}\right ) \oplus  \left ( \fg^{-\theta} \cap \fg^{-\sigma}\right )  
\]
 via the embedding \eqref{eq-lie-alg-2n-embedding}.
 
 When $\alpha\beta = 0$ it is clear that 
\begin{equation}
\label{eq-alpha-beta-zero}
\fg_{[\alpha{:}\beta]}^\sigma
 \cong 
 \left (\fg^{\theta} \cap \fg ^{\sigma}\right ) \ltimes  \left ( \fg^{-\theta} \cap \fg^{-\sigma}\right ) .
 \end{equation} 
Putting everything together, we have proved Theorem~\ref{thm-real-fmly-algebras1}.
 
The theorem can be re-expressed as follows.  Suppose we start with a finite-dimensional complex Lie algebra $\fg$ and two commuting conjugate-linear involutions $\sigma_1$, $\sigma_2$ of $\fg$.  
The formulas
\[
 \begin{aligned}
\theta  & : = \sigma_1\sigma_2  \\
 \sigma &:  =  \sigma_1 \\
\end{aligned}
 \]
associate to $\sigma_1$ and $\sigma_2$  two   commuting   involutions $\theta$ and $\sigma$ of $\fg$, the first complex-linear and the second conjugate-linear.    We obtain the following result:
  
\begin{theorem}
Let $\fg$ be a finite-dimensional complex   Lie algebra and let $\sigma_1$ and $\sigma_2$ be commuting conjugate linear involutions of $\fg$.   There is an  algebraic family of complex Lie algebras  $\g$ over $\C\P^1$ and a real structure on $\g$ associated with the standard real structure on $\C\P^1$ that has fixed fibers
\begin{eqnarray}\nonumber
&&\g^{\boldsymbol{\sigma}}|_{[\alpha:\beta]}\simeq \begin{cases}
\fg^{\sigma_1}  & \alpha\beta> 0\\
 \left ( \fg^{\sigma_1}\cap \fg^{\sigma_2}\right )  \ltimes \left ( \fg^{\sigma_1}\cap\fg^{-\sigma_2}\right )  &  \alpha\beta=0 \\
{\fg}^{\sigma_2}  & \alpha\beta< 0 .
\end{cases}
\end{eqnarray}
\end{theorem}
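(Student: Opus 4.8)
\section*{Proof proposal}

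The plan is to deduce this theorem immediately from Theorem~\ref{thm-real-fmly-algebras1} by the change of involutions indicated just before the statement. Given commuting conjugate-linear involutions $\sigma_1,\sigma_2$ of $\fg$, set $\theta := \sigma_1\sigma_2$ and $\sigma := \sigma_1$. First I would confirm that this pair is a legitimate input to Theorem~\ref{thm-real-fmly-algebras1}: the composite of two conjugate-linear maps is complex-linear, and $\theta^2 = \sigma_1\sigma_2\sigma_1\sigma_2 = \sigma_1^2\sigma_2^2 = \mathrm{id}$ because $\sigma_1$ and $\sigma_2$ commute, so $\theta$ is a complex-linear involution of $\fg$; $\sigma = \sigma_1$ is a conjugate-linear involution by hypothesis; and $\theta$ and $\sigma$ commute, since $\theta\sigma = \sigma_1\sigma_2\sigma_1 = \sigma_1\sigma_1\sigma_2 = \sigma_2$ and $\sigma\theta = \sigma_1\sigma_1\sigma_2 = \sigma_2$.

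Next, apply Theorem~\ref{thm-real-fmly-algebras1} to the pair $(\fg,\theta)$ equipped with the conjugate-linear involution $\sigma$. This yields an algebraic family $\g$ of complex Lie algebras over $\C\P^1$ together with a real structure $\boldsymbol\sigma$ compatible with the standard real structure on $\C\P^1$, whose fixed fibers over $\R\P^1$ are $\fg^{\sigma}$ for $\alpha\beta>0$, the semidirect product $(\fg^{\sigma}\cap\fg^{\theta})\ltimes(\fg^{\sigma}\cap\fg^{-\theta})$ for $\alpha\beta=0$, and $\fg^{\sigma\theta} = (\fg^{\theta}\cap\fg^{\sigma})\oplus(\fg^{-\theta}\cap\fg^{-\sigma})$ for $\alpha\beta<0$.

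The remaining step is to rewrite these three fibers in terms of $\sigma_1$ and $\sigma_2$. One has $\fg^{\sigma}=\fg^{\sigma_1}$ by definition, and $\fg^{\sigma\theta}=\fg^{\sigma_2}$ since $\sigma\theta=\sigma_1(\sigma_1\sigma_2)=\sigma_2$. For the intersection terms one reads off from the commutation relations that, for $X\in\fg^{\sigma_1}$, $\theta X = \sigma_1\sigma_2 X$, so $\theta X = X$ exactly when $\sigma_2 X = \sigma_1 X = X$ and $\theta X = -X$ exactly when $\sigma_2 X = -\sigma_1 X = -X$; hence $\fg^{\sigma}\cap\fg^{\theta}=\fg^{\sigma_1}\cap\fg^{\sigma_2}$ and $\fg^{\sigma}\cap\fg^{-\theta}=\fg^{\sigma_1}\cap\fg^{-\sigma_2}$. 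Substituting these equalities into the three cases above produces precisely the list of isomorphism types in the statement.

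Since the argument is purely a bookkeeping reduction to a theorem already established, there is no substantive obstacle; the only point demanding any care is the identification of the intersection subalgebras in the final step, which is also where the commutativity of $\sigma_1$ and $\sigma_2$ enters (beyond its role in making $\theta$ an involution and in making $\theta$ commute with $\sigma$). It is worth noting that the construction is symmetric in $\sigma_1$ and $\sigma_2$: taking instead $\sigma:=\sigma_2$ interchanges the $\alpha\beta>0$ and $\alpha\beta<0$ fibers, which matches the symmetry visible in Theorem~\ref{thm-two-real-forms}.
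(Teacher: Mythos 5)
Your reduction is exactly the paper's own argument: the paper derives this theorem from Theorem~\ref{thm-real-fmly-algebras1} by setting $\theta:=\sigma_1\sigma_2$ and $\sigma:=\sigma_1$, and you have simply written out the (correct) bookkeeping that the paper leaves implicit. No issues.
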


\begin{example}
For $\fg=\mathfrak{sl}(n,\C)$,  $\theta(A)=-A^t$  and $\sigma(A)=\overline{A}$, the above construction leads to the  family of real Lie algebras over $\R\P^1$  with fibers
\[
\fg_{[\alpha{:}\beta]}^\sigma
= 
\left\{ \,  
\begin{bmatrix}
X &  \alpha Y \\
  \beta Y&X
\end{bmatrix} \in \mathfrak{gl}_{2n}(\C) 
\,:\,  
X\in\mathfrak{so}(n,\R),Y\in \mathfrak{gl}(n,\R), Y^t=Y ) 
\,\right\} .
\]
It  links the split real form $\mathfrak{sl}(n,\R)$ with the compact real form $\mathfrak{su}(n)$. For $\alpha\beta\neq 0$ the map 
\[ \begin{bmatrix}
X &  \alpha Y \\
  \beta Y&X
\end{bmatrix}\longmapsto X+i^{1-\operatorname{sgn}(\alpha\beta)}Y\]
defines an isomorphism from $\fg_{[\alpha{:}\beta]}^\sigma$ onto $\mathfrak{sl}(n,\R)$ if $\alpha\beta>0$, and onto  $\mathfrak{su}(n)$ if  $\alpha\beta<0$.
\end{example} 
\section{A family of complex algebraic groups }
\label{sec-group-families}

In this section we shall construct an algebraic family of groups corresponding to the algebraic family of Lie algebras in Section~\ref{sec-algebra-families}.

We begin with a complex affine algebraic group $G\subseteq SL(n,\C)$ and an algebraic involution
\[
\theta \colon G \longrightarrow G .
\]
The formula 
\begin{equation}
\label{eq-embdg-fmla2}
g \longmapsto \frac 12 
\begin{bmatrix} 
g +  \theta (g) & g-  \theta (g) \\
g -  \theta (g) &g+  \theta (g) 
\end{bmatrix}
\end{equation}
defines a closed algebraic embedding of $G$ into $SL({2n},\C)$.
This is because 
\[
\frac 1 2 \begin{bmatrix} 
g +  \theta (g) & g-  \theta (g) \\
g -  \theta (g) &g+  \theta (g) 
\end{bmatrix} = 
A
\begin{bmatrix} 
g   &  0 \\
0 &  \theta (g) 
\end{bmatrix} 
A^{-1}
\]
where 
\[
A =  \begin{bmatrix}  \phantom{i}1 & -1 \\ \phantom{i}1 & \phantom{-}1\end{bmatrix}.
\]
Denote by $G_{1} $  the image of the embedding \eqref{eq-embdg-fmla2}; it is  a closed algebraic subgroup of $SL(2n,\C)$.

\begin{defn}
\label{def-standard-fibers}
For $z\in \C^*$, denote by    $d(z)$   the block matrix
\[
d(z) = \begin{bmatrix} z I & 0  \\ 0& I \end{bmatrix}
\]
in $GL(2n,\C)$ and define
\[
G_z = d(z^{\frac 12}) G_1 d(z^{-\frac 12})  \subseteq SL(2n, \C) .
\]
Observe that conjugation by the matrix
 \[
 d(-z^{\frac12})d(z^{-\frac 12}) =
 \begin{bmatrix}- I & 0  \\ 0& I \end{bmatrix}
 \]
  implements the involution $\theta$ on $G_1$. It follows that  the algebraic subgroup  $G_z\subseteq SL(2n,\C)$ does not depend on the choice of square root.
\end{defn}

The next proposition, and the two others below it in this section, will be proved in Section~\ref{sec-details-on-group-family}.

\begin{proposition}
\label{prop-C-star-family}
The subset
\[
  \bigsqcup _{z\in \C^*} \{ z\} \times G_z \subset \C^* \times SL(2n,\C)
\]
is  closed subvariety of the affine variety $\C^* \times SL(2n,\C)$ and an algebraic family of groups over $\C^*$.
\end{proposition}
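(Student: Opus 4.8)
The plan is to exhibit the set $Z = \bigsqcup_{z\in\C^*}\{z\}\times G_z$ as the zero locus of a family of regular functions on $\C^*\times SL(2n,\C)$, and then to verify the axioms of an algebraic family of groups. First I would set up coordinates: write a point of $SL(2n,\C)$ in block form $\left[\begin{smallmatrix} P & Q \\ R & S\end{smallmatrix}\right]$ with $n\times n$ blocks. By Definition~\ref{def-standard-fibers}, $G_z = d(z^{1/2})\,G_1\,d(z^{-1/2})$, and conjugation by $d(z^{1/2})$ sends $\left[\begin{smallmatrix} P & Q \\ R & S\end{smallmatrix}\right]$ to $\left[\begin{smallmatrix} P & z^{1/2}Q \\ z^{-1/2}R & S\end{smallmatrix}\right]$; the resulting conditions ``$\left[\begin{smallmatrix} P & z^{-1/2}Q \\ z^{1/2}R & S\end{smallmatrix}\right]\in G_1$'' are manifestly invariant under $z^{1/2}\mapsto -z^{1/2}$ because $G_1$ is $\theta$-stable (this is exactly the observation recorded in Definition~\ref{def-standard-fibers}), and hence can be rewritten using only $z$ and $z^{-1}$, i.e.\ as regular functions on $\C^*$. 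Concretely, since $G$ is cut out inside $SL(n,\C)$ by polynomials $f_\lambda$, the subgroup $G_1$ is cut out inside $SL(2n,\C)$ by the equations coming from $A^{-1}\left[\begin{smallmatrix} P & Q \\ R & S\end{smallmatrix}\right]A$ being block-diagonal with equal-looking blocks lying in $G$ (with $A$ as in the text); conjugating further by $d(z^{-1/2})$ and clearing the square roots produces a finite list of regular functions on $\C^*\times SL(2n,\C)$ whose common zero locus is precisely $Z$. This shows $Z$ is a closed subvariety.

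Next I would check that $Z$ is a family of groups over $\C^*$, meaning that the projection $\pi\colon Z\to\C^*$ is a flat (indeed smooth) surjective morphism whose fibers are algebraic groups and for which multiplication and inversion are morphisms over $\C^*$. Surjectivity and the fact that each fiber $\pi^{-1}(z)=\{z\}\times G_z$ is an algebraic subgroup of $SL(2n,\C)$ are immediate from the definition of $G_z$. For the group operations: the map $\Phi\colon \C^*\times SL(2n,\C)\to\C^*\times SL(2n,\C)$ given fiberwise by conjugation $g\mapsto d(z^{1/2})gd(z^{-1/2})$ is \emph{not} a morphism (square roots again), but the relative conjugation $\C^*\times G_1\to Z$, $(z,g)\mapsto (z, d(z^{1/2})gd(z^{-1/2}))$, \emph{is} regular after the same square-root clearing, because for $g\in G_1$ the off-diagonal blocks of $d(z^{1/2})gd(z^{-1/2})$ are $z^{\pm 1/2}$ times blocks that already vanish to odd order under $\theta$; equivalently one uses $A\left[\begin{smallmatrix} g_1 & 0 \\ 0 & g_2\end{smallmatrix}\right]A^{-1}$ and checks the entries are polynomial in $z$. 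This gives an isomorphism of varieties $\C^*\times G_1\xrightarrow{\ \sim\ }Z$ over $\C^*$ which is fiberwise a group isomorphism $G_1\xrightarrow{\sim}G_z$; transporting the obvious trivial family structure on $\C^*\times G_1$ through this isomorphism equips $Z$ with morphisms $Z\times_{\C^*}Z\to Z$ and $Z\to Z$ realizing multiplication and inversion. Flatness and smoothness of $\pi$ then follow since, over $\C^*$, $Z$ is isomorphic to the trivial family $\C^*\times G_1$.

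The main obstacle I expect is the bookkeeping that makes the square roots disappear, i.e.\ proving rigorously that the defining equations and the conjugation map descend to honest regular functions/morphisms in the coordinate $z$ on $\C^*$ rather than on a double cover. The key point that resolves it is structural rather than computational: the subgroup $G_1$ and the embedding \eqref{eq-embdg-fmla2} are built symmetrically from $g$ and $\theta(g)$, so every occurrence of $z^{1/2}$ is paired with a block that is odd under $\theta$, and conjugation by $d(-z^{1/2})d(z^{-1/2})=\left[\begin{smallmatrix}-I&0\\0&I\end{smallmatrix}\right]$ implements $\theta$ on $G_1$; thus all expressions are invariant under $z^{1/2}\mapsto -z^{1/2}$ and hence polynomial in $z,z^{-1}$. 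Once this is set up carefully for the defining ideal, the same mechanism handles the group operations, and the remaining verifications (closedness, the family axioms from \cite[Sec.~2]{Ber2016}) are routine.
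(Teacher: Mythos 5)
There is a genuine gap, and it sits exactly where you flagged the ``main obstacle'': the square roots do \emph{not} disappear from your relative conjugation map. The map $(z,g)\mapsto \bigl(z,\, d(z^{1/2})\,g\,d(z^{-1/2})\bigr)$ is not a morphism $\C^*\times G_1\to Z$, and is not even well defined: for $g=\frac12\bigl[\begin{smallmatrix} h+\theta(h) & h-\theta(h)\\ h-\theta(h)& h+\theta(h)\end{smallmatrix}\bigr]\in G_1$ the off-diagonal blocks of $d(w)gd(w)^{-1}$ are $\tfrac{w}{2}(h-\theta(h))$ and $\tfrac{1}{2w}(h-\theta(h))$, which are \emph{odd} in $w$, not even; the two choices of square root give different matrices, differing by the automorphism $\boldsymbol\theta=\mathrm{Ad}\bigl[\begin{smallmatrix}-I&0\\0&I\end{smallmatrix}\bigr]$ of $G_1$. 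What is independent of the choice is only the image \emph{set} $G_z$, not the map. Consequently your claimed trivialization $\C^*\times G_1\cong Z$ over $\C^*$ does not exist by this construction, and in fact $Z|_{\C^*}$ need not be a trivial family at all: it is the descent of the constant family along the \'etale double cover $w\mapsto w^2$ with gluing automorphism $\theta$, and already for $G=\mathbb{G}_m$ with $\theta(t)=t^{-1}$ this quotient is the nonsplit norm-one torus $\{x^2-zy^2=1\}$ over $\C[z,z^{-1}]$, which is not isomorphic to $\C^*\times\mathbb{G}_m$. Since your proofs of smoothness and flatness of $\pi$ rest entirely on this trivialization, they collapse with it.

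The salvageable parts: your descent of the defining equations \emph{is} a legitimate alternative to the paper's closedness argument, provided you symmetrize honestly --- each generator $f$ of the ideal of $G_1$ pulls back to a function $F(w,h)=f(d(w)^{-1}hd(w))$, and the pair $F(w,h)+F(-w,h)$, $w\bigl(F(w,h)-F(-w,h)\bigr)$ is even in $w$, hence regular in $z=w^2$, with the same common zero locus because $\theta(G_1)=G_1$. (The paper instead realizes $Z$ as the image of the genuine morphism $(w,g)\mapsto(w^2,d(w)gd(w)^{-1})$ from $\C^*\times G_1$ and quotes Serre: the analytic closure of the image of a regular map of affine varieties is Zariski closed.) Likewise, once closedness is known, multiplication and inversion restrict to morphisms simply because each fiber is a subgroup of $SL(2n,\C)$; no trivialization is needed there. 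What you are missing is an independent argument that $Z$ is nonsingular and that $\pi$ is smooth. The paper gets this analytically: the Lie algebras $\fg_z$ form a holomorphic subbundle, so $Z$ is a complex submanifold of $\C^*\times SL(2n,\C)$ by the exponential map; Milnor's criterion then gives nonsingularity as a variety, and a submersion of nonsingular varieties is a smooth morphism. You would need to supply a step of this kind, or else work on the double cover in the coordinate $w$ and descend the smoothness statement.
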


We shall obtain our algebraic family of groups over $\C\P^1$ by taking a closure of the family above, as follows:

\begin{defn}
Denote by $\boldsymbol G$ the Zariski closure of  the set
\[
\bigsqcup _{z\in \C^*} \{ [z{:}1] \} \times G_z
\]
 in  $\C\P^1 \times SL(2n,\C)$.
\end{defn}

In view of Proposition~\ref{prop-C-star-family} the fibers of $\BG$ over points $[z{:}1]$ with $z\ne 0$ (that is the inverse images of points under the obvious projection map) are the groups $G_z$.  

\begin{defn}
\label{def-special-fibers}
Define algebraic subgroups 
\[
G_0,G_\infty \subseteq SL(2n,\C)
\]
by the formulas
\[
G_0 = \left \{ \,
 \begin{bmatrix} 
 g & 0  \\ gX & g
 \end{bmatrix}
 \,: \, g \in G^\theta\quad\text{and}\quad X \in \fg^{-\theta}   
\,\right\} .
\]
and 
\[
G_\infty = 
\left \{ \,
 \begin{bmatrix} 
 g & gX \\ 0 & g
 \end{bmatrix}
 \,: \, g \in G^\theta\quad\text{and}\quad X \in \fg^{-\theta}   
\,\right\} .
\]
Here we identify the Lie algebra $\g$ with a subspace of $\mathfrak{sl}(2n,\C)$, and so as a vector space of $n{\times}n$ matrices, in the usual way.  
\end{defn}

\begin{proposition}
\label{prop-find-special-fibers}
The fiber of the map $\BG \to \C\P^{1}$ over any $[\alpha{:}\beta]$ is the group   $G_{\alpha/\beta}$ \textup{(}we set $\alpha/\beta=\infty$ when $\beta =0$\textup{)}.\end{proposition}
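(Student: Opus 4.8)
The plan is to identify the fiber of $\BG$ over $[\alpha{:}\beta]$ directly from the definition of $\BG$ as a Zariski closure, treating the generic case $\beta\ne 0$ separately from the two special points $[1{:}0]$ and $[0{:}1]$.

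For $\beta\ne 0$, set $z=\alpha/\beta\in\C^*$ or $z=0$. When $z\ne 0$, Proposition~\ref{prop-C-star-family} already tells us the family $\bigsqcup_z\{z\}\times G_z$ is closed in $\C^*\times SL(2n,\C)$, so its closure in $\C\P^1\times SL(2n,\C)$ meets the open set $(\C\P^1\setminus\{[1{:}0]\})\times SL(2n,\C)$ in exactly that locally closed set together with the fiber over $[0{:}1]$; hence the fiber over $[z{:}1]$ is $G_z$ for $z\ne 0$, and it remains to compute the fiber over $[0{:}1]$ and over $[1{:}0]$. By symmetry (the substitution $z\mapsto z^{-1}$ conjugates $G_z$ to a group of the same shape with the upper-right and lower-left blocks swapped, and identifies $\C^*$ with itself exchanging $0$ and $\infty$), it suffices to handle $[0{:}1]$ and then transport the result to $[1{:}0]$; I would spell out that the conjugation $d(z^{1/2})G_1 d(z^{-1/2})$ has a manifest limit as $z\to 0$.

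The key computation is that $\lim_{z\to 0}G_z=G_0$. Writing a general element of $G_1$ in the block form coming from \eqref{eq-embdg-fmla2}, namely $A\,\mathrm{diag}(g,\theta(g))\,A^{-1}$, conjugating by $d(z^{1/2})$ scales the off-diagonal blocks by $z^{\pm 1/2}$. One then parametrizes a curve $z\mapsto x(z)\in G_z$ that converges as $z\to 0$ and checks, using that $G^\theta$ is the identity component issue aside the fixed group and that $\fg^{-\theta}$ is its "$-1$ eigenspace", that the set of all such limits is exactly the set of matrices $\left[\begin{smallmatrix} g & 0\\ gX & g\end{smallmatrix}\right]$ with $g\in G^\theta$, $X\in\fg^{-\theta}$. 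The inclusion "$\subseteq$" comes from taking limits of explicit curves $g\exp(z^{1/2}X)\theta(\cdot)$-type expressions inside $G_1$; the inclusion "$\supseteq$" (that every such matrix is actually a limit, hence lies in the closure) is where one must produce, for each $(g,X)$, an honest algebraic curve in $\bigsqcup_{z}\{[z{:}1]\}\times G_z$ limiting to it — this uses that $\fg^{-\theta}$ generates, under the exponential and $\theta$-twisting, enough of $G$ near $g$. I expect this surjectivity onto $G_0$ to be the main obstacle, since it requires more than a naive limit of a single parametrization; one likely wants to first show $G_0$ is irreducible of the expected dimension (equal to $\dim G$, matching $\dim G_z$), show the closure's fiber over $[0{:}1]$ contains $G_0$ and is contained in a group of the displayed form, and then conclude by a dimension/irreducibility count that the two coincide, invoking that fibers of a family over a smooth curve that is flat (or at least that the total space is irreducible of dimension $\dim G+1$) have dimension $\ge\dim G$.

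Finally, once the fiber over $[0{:}1]$ is identified with $G_0$, applying the swap $z\mapsto z^{-1}$ gives that the fiber over $[1{:}0]$ is the group obtained from $G_0$ by interchanging the two off-diagonal positions, which is precisely $G_\infty$ as in Definition~\ref{def-special-fibers}. This establishes that the fiber over every $[\alpha{:}\beta]$ is $G_{\alpha/\beta}$ with the convention $\alpha/\beta=\infty$ when $\beta=0$, completing the proof of Proposition~\ref{prop-find-special-fibers}.
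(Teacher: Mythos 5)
Your handling of the generic fibers and of the easy inclusion $G_0\subseteq \BG|_{[0:1]}$ matches the paper: Proposition~\ref{prop-C-star-family} settles the fibers over $[z{:}1]$ with $z\ne 0$, and explicit curves $z\mapsto g\exp(zX)$ with $g\in G^\theta$, $X\in\fg^{-\theta}$ exhibit every element of $G_0$ as a limit, so the fiber of the closure over $[0{:}1]$ contains $G_0$ (similarly at $[1{:}0]$ after the swap $z\mapsto z^{-1}$, which is fine). The genuine gap is the reverse inclusion $\BG|_{[0:1]}\subseteq G_0$, which you propose to get from a dimension/irreducibility count after the fiber ``is contained in a group of the displayed form.'' That containment is itself the hard step, and the dimension count cannot replace it. The only containment that comes for free from the shape of elements of $G_z$ is that a limit point looks like $\left[\begin{smallmatrix} g & 0\\ k & g\end{smallmatrix}\right]$ with $g\in G^\theta$ and $k$ an \emph{arbitrary} limit of $(2z_n)^{-1}(g_n-\theta(g_n))$; the locus of such matrices with unconstrained $k$ has dimension $\dim G^\theta + n^2$, far exceeding $\dim G$, so the fiber-dimension bound coming from flatness or from dominance (every component of the special fiber has dimension at least, and in fact equal to, $\dim G$) does not exclude extra components of the correct dimension lying in that larger ambient set, nor components other than $G_0$ (think of a family of conics degenerating to two lines). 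What must actually be proved is that $k=gW$ with $W\in\fg^{-\theta}$, and the paper does this in Lemma~\ref{lem-analytic-closure} by writing $g_n=g\exp(X_n)$ with $X_n\to 0$, using the estimate $\|\exp(X)-(I+X)\|\le C\|X\|^2$ together with a Campbell--Baker--Hausdorff estimate to show that $(2z_n)^{-1}(X_n-\theta(X_n))$ is bounded, that its limit points $W$ lie in $\fg^{-\theta}$, and that the limiting lower-left block is $gW$. Nothing in your outline substitutes for these estimates.

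A secondary point: $\BG$ is defined as a Zariski closure, while your fiber computation is a computation of analytic limits. The paper bridges the two topologies with Serre's result (Proposition~\ref{prop-serre}) that the analytic closure of the image of a regular map of affine varieties is Zariski closed, so that Lemma~\ref{lem-analytic-closure} really does compute the fibers of $\BG$. Without some such input, identifying the set of analytic limit points does not by itself determine the fibers of the Zariski closure.
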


 \begin{proposition} 
 \label{prop-alg-family-of-groups}
 The variety $\boldsymbol G$ is nonsingular, and is an algebraic family of groups over $\C\P^1$.
\end{proposition}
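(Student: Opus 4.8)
The plan is to reduce everything to a local computation near the two bad fibers $G_0$ and $G_\infty$, exploiting the $z \mapsto 1/z$ symmetry of the construction. First I would establish the set-theoretic identity claimed in Proposition~\ref{prop-find-special-fibers}, namely that the fibers of $\boldsymbol G \to \C\P^1$ over $[1{:}0]$ and $[0{:}1]$ are exactly $G_\infty$ and $G_0$. This is where the explicit embedding \eqref{eq-embdg-fmla2} pays off: a general element of $G_z$ has the block form obtained from $A\,\mathrm{diag}(g,\theta(g))\,A^{-1}$ conjugated by $d(z^{1/2})d(z^{-1/2})^{-1}$-type matrices, and one can write its entries as explicit regular functions of $z$ on $\C^*$ (polynomial after clearing a bounded power of $z$). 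Taking $z \to 0$ in the $[z{:}1]$ chart and $z \to \infty$ (i.e. $w = 1/z \to 0$ in the $[1{:}w]$ chart) and computing the limits of these matrix entries gives $G_0$ and $G_\infty$ respectively; the reverse inclusion (that every point of $G_0$, resp.\ $G_\infty$, actually lies in the Zariski closure) follows by exhibiting, for each $(g,X)$ with $g \in G^\theta$, $X \in \fg^{-\theta}$, an explicit curve $z \mapsto$ (element of $G_z$) limiting to the prescribed point — this uses that $\exp(tX) \in G$ traces out the unipotent part, combined with the scaling by $z$. I would do the $G_0$ case in detail and note the $G_\infty$ case is identical after the involution $z \leftrightarrow 1/z$, which is realized on $SL(2n,\C)$ by conjugation by $\bigl[\begin{smallmatrix} 0 & I \\ I & 0\end{smallmatrix}\bigr]$ and swaps the two charts.

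Next, for the \emph{algebraic family of groups} structure, I would work in the two standard affine charts $\C\P^1 = \C_z \cup \C_w$ with $w = 1/z$. Over $\C_z$, the variety $\boldsymbol G \cap (\C_z \times SL(2n,\C))$ is the Zariski closure in $\C_z \times SL(2n,\C)$ of the closed subvariety $\bigsqcup_{z \ne 0}\{z\}\times G_z$ of $\C^*\times SL(2n,\C)$ from Proposition~\ref{prop-C-star-family}; I would show this closure is \emph{flat} over $\C_z$ (equivalently, the structure sheaf is torsion-free over $O_{\C_z}$, which is automatic since $\boldsymbol G$ is irreducible — being the closure of an irreducible variety — and dominates $\C_z$), so the fiber over $z=0$ has the expected dimension $\dim G$, matching the explicit $G_0$ from the previous step. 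The group multiplication $G_z \times G_z \to G_z$ and inversion $G_z \to G_z$ extend from $\C^*$ to all of $\C_z$ by continuity of regular maps on the closure (a morphism defined on a dense open subset of a variety mapping to an affine/separated target extends uniquely if the source is normal — so here I would really want the nonsingularity first, or argue by the valuative criterion fiberwise); so nonsingularity and the family-of-groups property are intertwined and should be proved together.

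The main obstacle, and the heart of the proof, is \textbf{nonsingularity of $\boldsymbol G$ along the special fibers}. Away from $[1{:}0]$ and $[0{:}1]$ the variety is smooth because it is an $SL(2n,\C)$-conjugate-translate of the smooth family $G_z$, which is locally isomorphic to $\C^*\times G_1$. At a point $p$ of $G_0$ (the $G_\infty$ case being symmetric), I would compute the Zariski tangent space $T_p\boldsymbol G \subseteq T_{([0{:}1],p)}(\C\P^1\times SL(2n,\C)) = \C \oplus T_p SL(2n,\C)$ and show it has dimension $\dim G + 1$. The clean way to do this is to produce enough explicit tangent curves: for the "horizontal" directions, differentiate the curves $z\mapsto \exp(zY)$-type deformations inside each $G_z$ at $z=0$, which by the Lie-algebra computation of Section~\ref{sec-algebra-families} — specifically the fact that $\g|_{[0:1]} \cong \fg^\theta \ltimes \fg^{-\theta}$ has dimension $\dim\fg$ — sweeps out a space of dimension $\dim G$ tangent to $G_0$; for the "vertical" ($\partial/\partial z$) direction, differentiate the curve $z\mapsto$ (a smoothly chosen element of $G_z$ through $p$ for $z$ near $0$) to get one more independent tangent vector. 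Since $\boldsymbol G$ is irreducible of dimension $\dim G + 1$ and the Zariski tangent space at every point has dimension at most $\dim G + 1$ (from these $\dim G + 1$ curves we get the lower bound; the upper bound is the dimension of $\boldsymbol G$), every point is smooth. The delicate point to get right is the transversality of the vertical curve to $G_0$ — i.e.\ that the chosen section $z\mapsto p(z)$ really does move off the fiber to first order — and the fact that the defining equations of $\boldsymbol G$ in the chart are actually generated by the "expected" ones (that no embedded components appear over $z=0$), which is precisely what flatness from the previous paragraph guarantees. Once smoothness is in hand, the extension of multiplication and inverse to the closure is automatic (morphisms from a smooth, hence normal, variety extending over codimension-$\ge 1$ loci), and the group axioms hold on the special fibers by continuity, completing the proof.
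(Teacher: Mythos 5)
Your strategy for nonsingularity has a genuine gap at its central step. You assert that the Zariski tangent space at a point $p$ of the special fiber has dimension at most $\dim G+1$ ``because that is the dimension of $\boldsymbol G$''; but the dimension of a variety is a \emph{lower} bound for the dimension of its Zariski tangent spaces, with equality precisely at the nonsingular points --- which is what is to be proved. So exhibiting $\dim G+1$ independent tangent curves proves nothing beyond what $\dim_p\boldsymbol G=\dim G+1$ already gives; what is needed is an \emph{upper} bound, i.e.\ enough equations for $\BG$ near $p$ with independent differentials. Flatness does not supply this. Flatness of $\BG\to\C\P^1$ does follow from torsion-freeness over a smooth curve once you note that every irreducible component of $\BG$ dominates $\C\P^1$ (be careful: $G$, hence $\BG$, need not be irreducible, only a finite union of such components), but flatness only excludes components or embedded points of the \emph{total space} supported over $z=0$; it does not exclude the scheme-theoretic fiber over $z=0$ being non-reduced, and a flat family over a smooth curve with smooth generic fibers can perfectly well have singular total space (e.g.\ $xy=z^2$ over the $z$-line). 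The ``delicate point'' you flag is therefore exactly where the proof is missing, and the tools you invoke do not close it. A workable algebraic version of your plan would be: show that the scheme-theoretic special fiber of the flat closure is the \emph{reduced} group $G_0$ (a multiplicity-one computation), and then invoke ``flat with smooth fibers over a smooth base implies smooth total space''; but that reduction step is the real content and is not supplied.

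The paper avoids all of this by working analytically and transferring back. Lemma~\ref{lem-analytic-closure} identifies the analytic closure set-theoretically; Lemma~\ref{lem-smooth} shows the total space is a complex \emph{submanifold} of $\C\P^1\times SL(2n,\C)$ with submersive projection, the key input being that near the special fibers one can parametrize $\BG$ by exponentiating the complex-analytic subbundle of Lie algebras constructed in Section~\ref{sec-algebra-families}, whose fiber at $[0{:}1]$ is $\fg^\theta\ltimes\fg^{-\theta}$ and hence already has the full dimension $\dim\fg$. Proposition~\ref{prop-serre} (Serre) then identifies the analytic closure with the Zariski closure, Proposition~\ref{prop-milnor} (Milnor) converts ``complex submanifold'' into ``nonsingular variety,'' and a holomorphic submersion between nonsingular varieties is a smooth morphism. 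This one local-analytic computation delivers simultaneously the upper bound on the tangent spaces and the smoothness of the projection, which is precisely the part your proposal leaves open.
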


 \begin{remark}
To $\BG$ there is an associated algebraic family of Lie algebras   (for details see  \cite[sec. 2.2.1]{Ber2016}), which is  precisely the family and involution constructed in Section~\ref{sec-algebra-families}.
\end{remark}

\begin{remark}
The involution of $\C\P^1 {\times} SL(2n,\C)$ which is induced by conjugation by  $ \left [\begin{smallmatrix}- I & 0  \\ 0& I \end{smallmatrix}\right ] \in GL(2n,\C)$ restricts to an involution $\boldsymbol \theta$  of $\BG$.
It induces the involution of $\g$ that was discussed at the end of  Section~\ref{sec-algebra-families}. On each fiber of $\BG$,  the effect of $\boldsymbol \theta$  is to multiply  by $-1$ the off-block-diagonal elements. 
The fixed point subfamily $\BG^{\boldsymbol \theta}$ is the constant family over $\C\P^1$  with fiber $G^{\theta}$ (embedded block  diagonally inside $SL(2n,\C)$), and on  $\BG|_{[1:1]}=G_1$  the involution coincides with $\theta$. 
\end{remark}

 \begin{remark}
As was the case with the  family of Lie algebras, the family $\BG$ is independent (up to isomorphism) of the choice of the closed embedding of $G$ into $ SL(n,\C)$ used in the construction.  To see this, suppose given two families $\BG_1$ and $\BG_2$, as above, associated to two embeddings of $G$ into $SL(n_1,\C)$ and $SL(n_2,\C)$. Consider the diagonal embedding 
\[
G \longrightarrow   SL(n_1,\C){\times}SL(n_2,\C) \longrightarrow SL(n_1{+}n_2, \C).
\]
  Applying our construction to this third embedding,  we obtain a third  family,  $\BG_3$.
There are obvious projection morphisms from $\BG_3$ to $\BG_1$ and $\BG_2$, and these are isomorphisms.
\end{remark}

\section{Real structure on the family of   groups}
 \label{sec-group-real-structure}
 
Let $G$ be a  complex  affine algebraic group   and let $\theta$ be  an algebraic involution of $G$.  In addition,  let $\sigma$ be an antiholomorphic involution of $G$ that commutes with $\theta$.   The purpose of this section is to associate to $\sigma$ a real structure on the family $\BG$ that was constructed in the previous section (that is,  is an involution $\BG \to \overline{\BG}$ that is a morphism of families; see \cite[section 2.5.3]{Ber2016}), and then study the associated family of real forms  We shall prove the following result:

 \begin{theorem}
 \label{thm-real-fmly-groups1}
Let $G$ be a  complex   affine algebraic group, let $\theta$ be an algebraic  involution of $G$, and let $\BG$ be the algebraic family of Lie algebras over $\C\P^1$ from the previous section associated to $G$ and $\theta$.  If $\sigma $ is any antiholomorphic involution of $G$ that commutes with $\theta$, then there is an associated  real structure on $\BG$  whose  fixed fibers over the fixed space $\R\P^1\subseteq \C\P^1$ have the following isomorphism types:
\begin{eqnarray}\nonumber
&&\BG ^{\sigma}|_{[\alpha:\beta]}\cong 
 \begin{cases}
G^{\sigma} & \alpha\beta> 0\\
 \left (G^{\sigma}\cap G^{\theta}\right ) \ltimes \left ( \fg^{\sigma}\cap\fg^{-\theta}\right )&  \alpha\beta=0 \\
 {G}^{\sigma\theta }  & \alpha\beta< 0 .
\end{cases}
\end{eqnarray}
\end{theorem}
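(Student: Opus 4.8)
The plan is to transport the Lie-algebra argument of Section~\ref{sec-real-structure-algebras} verbatim to the group level, using the explicit description of the fibres $G_z$, $G_0$, $G_\infty$ of $\BG$ from Section~\ref{sec-group-families}. As a first step I would record the group analogue of Lemma~\ref{lem-can-form-comm-inv1}: starting from any closed embedding $G\hookrightarrow SL(k,\C)$, the map $g\mapsto\left[\begin{smallmatrix}g&0\\0&\overline{\sigma(g)}\end{smallmatrix}\right]$ is a closed algebraic embedding of $G$ into $SL(2k,\C)$ (the map $g\mapsto\overline g$ carries $G$ onto the algebraic subgroup $\overline G\subseteq SL(k,\C)$, and $g\mapsto\overline{\sigma(g)}$ is a holomorphic homomorphism, being a composite of two antiholomorphic ones), and with $S=\left[\begin{smallmatrix}0&I\\I&0\end{smallmatrix}\right]$ the conjugate-linear automorphism $\tau\colon X\mapsto S\overline X S$ of $SL(2k,\C)$ restricts to $\sigma$ on $G$. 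Since $\BG$ is independent, up to isomorphism, of the chosen closed embedding (last Remark of Section~\ref{sec-group-families}), I may as well build $\BG$ from this embedding, with $n=2k$.

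Next I define the real structure. Let $\widetilde\tau$ be the conjugate-linear group automorphism of $SL(2n,\C)$ given in $n{\times}n$ block form by $M\mapsto\left[\begin{smallmatrix}S&0\\0&S\end{smallmatrix}\right]\overline M\left[\begin{smallmatrix}S&0\\0&S\end{smallmatrix}\right]$; it is antiholomorphic and $\widetilde\tau^{\,2}=\mathrm{id}$, and paired with the standard real structure on $\C\P^1$ it gives an algebraic involution of $\C\P^1\times SL(2n,\C)$. The key claim is that $\widetilde\tau(G_z)=G_{\overline z}$ for every $z\in\C^*$. This follows from two short computations: because $\left[\begin{smallmatrix}S&0\\0&S\end{smallmatrix}\right]$ is real and commutes both with $A=\left[\begin{smallmatrix}I&-I\\I&I\end{smallmatrix}\right]$ and with every $d(w)$, one finds $\widetilde\tau\bigl(A\left[\begin{smallmatrix}g&0\\0&\theta(g)\end{smallmatrix}\right]A^{-1}\bigr)=A\left[\begin{smallmatrix}\sigma(g)&0\\0&\theta\sigma(g)\end{smallmatrix}\right]A^{-1}$ (using $\tau|_{G}=\sigma$ and $\sigma\theta=\theta\sigma$), so $\widetilde\tau(G_1)=G_1$; and $\widetilde\tau\bigl(d(z^{1/2})\,h\,d(z^{-1/2})\bigr)=d(\overline z^{1/2})\,\widetilde\tau(h)\,d(\overline z^{-1/2})$, so $\widetilde\tau(G_z)=G_{\overline z}$. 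Hence the ambient involution carries the subset $\bigsqcup_{z\in\C^*}\{[z{:}1]\}\times G_z$ onto its conjugate and, being algebraic, carries the Zariski closure $\BG$ onto $\overline\BG$; since $\widetilde\tau$ is a homomorphism on each fibre, the restriction $\boldsymbol\sigma$ is a real structure on the family $\BG$ in the sense of \cite[Sec.~2.5.3]{Ber2016}.

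It then remains to compute the fixed fibres over $[\alpha{:}\beta]\in\R\P^1$. By Proposition~\ref{prop-find-special-fibers} the fibre is $G_z$ with $z=\alpha/\beta\in\R\cup\{\infty\}$, and $\boldsymbol\sigma$ acts on it as $\widetilde\tau|_{G_z}$, so $\BG^{\boldsymbol\sigma}|_{[\alpha{:}\beta]}=(G_z)^{\widetilde\tau}$. If $z>0$, conjugation by the real matrix $d(z^{1/2})$ intertwines $\widetilde\tau$ with itself, whence $(G_z)^{\widetilde\tau}\cong(G_1)^{\widetilde\tau}=\{A\left[\begin{smallmatrix}g&0\\0&\theta(g)\end{smallmatrix}\right]A^{-1}:g\in G^\sigma\}\cong G^\sigma$. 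If $z<0$, write $d(z^{1/2})=d(i)\,d(|z|^{1/2})$; conjugating by $d(i)$ introduces a twist by $d(-1)=\left[\begin{smallmatrix}-I&0\\0&I\end{smallmatrix}\right]$, i.e.\ by the involution $\boldsymbol\theta$ of $\BG$ from Section~\ref{sec-group-families}, so $(G_z)^{\widetilde\tau}\cong(G_{|z|})^{\boldsymbol\theta\circ\widetilde\tau}\cong(G_1)^{\boldsymbol\theta\circ\widetilde\tau}\cong G^{\theta\sigma}=G^{\sigma\theta}$, using that $\boldsymbol\theta$ and $\widetilde\tau$ restrict on $G_1$ to $\theta$ and $\sigma$ respectively. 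Finally, for $z\in\{0,\infty\}$ I use the explicit form of $G_0$ and $G_\infty$ from Definition~\ref{def-special-fibers}: $\widetilde\tau$ sends $\left[\begin{smallmatrix}g&0\\gX&g\end{smallmatrix}\right]$ to $\left[\begin{smallmatrix}\sigma(g)&0\\\sigma(g)\sigma(X)&\sigma(g)\end{smallmatrix}\right]$ (and symmetrically for $G_\infty$), so the fixed fibre is $\{g:\sigma(g)=g,\,g\in G^\theta\}\ltimes\{X:\sigma(X)=X,\,X\in\fg^{-\theta}\}=(G^\sigma\cap G^\theta)\ltimes(\fg^\sigma\cap\fg^{-\theta})$. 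Assembling the three cases gives the asserted isomorphism types.

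The main obstacle — really the only point that is not pure bookkeeping — is the middle step: checking that the conjugate-linear automorphism $\widetilde\tau$ of the ambient group genuinely respects $\BG$, which is defined as a Zariski closure rather than by explicit equations, and that the outcome is a morphism of families in the sense of \cite{Ber2016}. Once that is in hand, the fibre computations are exactly those carried out at the Lie-algebra level in Section~\ref{sec-real-structure-algebras}, combined with the fibre formulas of Section~\ref{sec-group-families}; in particular the factor of $i$ that converted $\fg^\sigma$ into $\fg^{\sigma\theta}$ there is precisely what converts $G^\sigma$ into $G^{\sigma\theta}$ here.
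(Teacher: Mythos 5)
Your proposal is correct and follows essentially the same route as the paper: the same auxiliary embedding $g\mapsto\left[\begin{smallmatrix}g&0\\0&\overline{\sigma(g)}\end{smallmatrix}\right]$ with $S=\left[\begin{smallmatrix}0&I\\I&0\end{smallmatrix}\right]$ (Lemma~\ref{lem-can-form-comm-inv2}), the same block-diagonal conjugate-linear involution paired with the standard real structure on $\C\P^1$, and the same case-by-case fibre computation over $z>0$, $z<0$, and $z\in\{0,\infty\}$. The only cosmetic differences are that you verify invariance of the Zariski closure a bit more explicitly and organize the fixed-point computation via conjugation by $d(z^{1/2})$ rather than by solving the equations $\sigma(g)\pm\sigma(\theta(g))=\pm(g\pm\theta(g))$ directly, which yields the same conclusion.
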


As in the Lie algebra case, for the proof of the theorem  it will   be convenient to choose a particular   embedding of $G$  into $SL(n,\C)$.  The following lemma is proved in exactly the same way as Lemma~\ref{lem-can-form-comm-inv1}.

\begin{lemma}\label{lem-can-form-comm-inv2}
There exists a closed embedding  of $\iota\colon G\to SL(n,\C)$ for some $n$, and a matrix $S\in GL(n,\R)$ such that $ S^2 = I$, and such that the antiholomorphic involution $\tau$ of $SL(n,\C)$ defined by 
\[
\tau \colon X \longmapsto  S\overline{X}S 
\]
  satisfies 
$
\iota(\sigma(g)) =  \tau( \iota(g))
$
  for every $g \in G$. \qed\end{lemma}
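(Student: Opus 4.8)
The plan is to imitate the proof of Lemma~\ref{lem-can-form-comm-inv1}, replacing the Lie algebra by the group $G$ and the conjugate-linear involution by the antiholomorphic involution $\sigma$. First I would start with \emph{any} closed algebraic embedding $\iota_0\colon G \hookrightarrow SL(k,\C)$, which exists since $G$ is a complex affine algebraic group. Setting $n = 2k$, I would then define
\[
\iota\colon G \longrightarrow SL(2k,\C), \qquad
g \longmapsto
\begin{bmatrix}
\iota_0(g) & 0 \\ 0 & \overline{\iota_0(\sigma(g))}
\end{bmatrix},
\]
where the overline denotes entrywise complex conjugation, and take
\[
S = \begin{bmatrix} 0 & I \\ I & 0 \end{bmatrix} \in GL(2k,\R),
\]
which visibly has real entries and satisfies $S^2 = I$.

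Next I would check that $\iota$ has the required properties. It is a group homomorphism because $\sigma$ and $\iota_0$ are homomorphisms and entrywise conjugation is multiplicative, $\overline{AB}=\overline A\,\overline B$; and it lands in $SL(2k,\C)$ because both diagonal blocks have determinant $1$ (the first lies in $SL(k,\C)$, and the second is the conjugate of an element of $SL(k,\C)$, whose determinant is $\overline{1}=1$). The one step that is not purely formal — and the place where the antiholomorphicity of $\sigma$ is actually used — is the verification that $\iota$ is a \emph{morphism of algebraic varieties}. Writing $a_{ij}$ for the regular matrix-entry functions of $\iota_0$ on $G$, the entries of $\overline{\iota_0(\sigma(g))}$ are the pullbacks $\sigma^*(\overline{a_{ij}})$, and these are regular on $G$ because $\sigma$ is a morphism $G\to\overline G$ and $\overline{a_{ij}}$ is a regular function on the conjugate variety $\overline G$. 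Injectivity of $\iota$ is clear, and $\iota$ is a closed immersion because its composition with the (separated) projection of $SL(2k,\C)$ onto its upper-left $k{\times}k$ block equals the closed immersion $\iota_0$.

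Finally I would verify the intertwining identity by the direct block computation
\[
\tau(\iota(g)) = S\,\overline{\iota(g)}\,S
= S \begin{bmatrix} \overline{\iota_0(g)} & 0 \\ 0 & \iota_0(\sigma(g)) \end{bmatrix} S
= \begin{bmatrix} \iota_0(\sigma(g)) & 0 \\ 0 & \overline{\iota_0(g)} \end{bmatrix},
\]
which equals $\iota(\sigma(g))$ precisely because $\sigma(\sigma(g))=g$, so the lower-right block $\overline{\iota_0(\sigma(\sigma(g)))}$ of $\iota(\sigma(g))$ is $\overline{\iota_0(g)}$. I expect the only genuine subtlety to be the algebraicity claim in the second paragraph: one must confirm that conjugating the antiholomorphic map $\iota_0\circ\sigma$ produces a bona fide algebraic morphism, which is exactly the group analogue of the observation that $X\mapsto\overline{\sigma(X)}$ is complex-linear in the Lie algebra case. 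The remaining checks that $\tau$ is itself an antiholomorphic involution of $SL(2k,\C)$ (so that the statement is meaningful) are routine, since $\tau^2=\mathrm{id}$ and multiplicativity of $\tau$ both follow from $S^2=I$ and $\overline S=S$.
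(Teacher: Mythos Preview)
Your proof is correct and follows exactly the route the paper intends: the paper simply says this lemma ``is proved in exactly the same way as Lemma~\ref{lem-can-form-comm-inv1},'' i.e.\ double the size via $g\mapsto\operatorname{diag}(\iota_0(g),\overline{\iota_0(\sigma(g))})$ and take $S=\left[\begin{smallmatrix}0&I\\I&0\end{smallmatrix}\right]$. Your added justification that $g\mapsto\overline{\iota_0(\sigma(g))}$ is a genuine morphism of varieties (using that $\sigma\colon G\to\overline G$ is regular and that entrywise conjugation is a regular isomorphism $\overline{SL(k,\C)}\to SL(k,\C)$) is a detail the paper leaves implicit, and your closed-immersion argument via the upper-left block is fine once phrased as surjectivity of $\iota^*$ on coordinate rings.
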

  
 \begin{remark}
 As in Lemma~\ref{lem-can-form-comm-inv1}, we can choose $S$ so that the fixed subgroup of the involution on $SL(n,\C)$ is $SU(k,k)$.  Thus every real form embeds into $SU(k,k)$, for some $k$.
 \end{remark}
  
\begin{proof}[Proof of Theorem~\ref{thm-real-fmly-groups1}] 
Embed $G$ into $SL(n,\C)$ as in  Lemma \ref{lem-can-form-comm-inv2} above, and let $S$ be the matrix in that lemma.   
Then embed $G$ into $SL(2n,\C)$ using the   formula 
\begin{equation*}
g \longmapsto \frac 12 
\begin{bmatrix} 
g +  \theta (g) & g-  \theta (g) \\
g -  \theta (g) &g+  \theta (g) 
\end{bmatrix}
\end{equation*}
that we used in the previous section, and apply the construction of the previous section to obtain an  algebraic family of groups $\BG$ that is embedded as  a subfamily of the constant family over $\C\P^1$ with  fiber $SL(2n,\C)$.

When it is combined with  the standard real structure on $\C\P^1$, the formula 
\[
\begin{bmatrix}
A &B \\ C & D
\end{bmatrix}
\longmapsto 
\begin{bmatrix}
S & 0 \\ 0 & S 
\end{bmatrix}
\begin{bmatrix}
\overline A & \overline  B \\ \overline  C & \overline  D
\end{bmatrix}
\begin{bmatrix}
S  & 0 \\ 0 & S 
\end{bmatrix}
\]
determines  a  real structure on the constant family over $\C\P^1$    with
  fiber $SL(2n,\C)$ . It  restricts to a real structure on $\BG$ that is associated to the standard real structure on $\C\P^1$.
  
  Let us now determine the  fixed fibers in $\BG$ over  $  \R\P^1\subseteq \C\P^1$.
  The  fibers of $\BG$   are the groups $G_z$ from Definitions~\ref{def-standard-fibers} and \ref{def-special-fibers}.  As in the Lie algebra case, we shall consider separately the cases $z>0$, $z<0$ and $z=0, \infty$.

If $z>0$, and if $\gamma\in \R$ is a square root of $z$, then  
\begin{equation}
\label{eq-pos-group}
G_z = 
\left \{ \,
\frac 12 \begin{bmatrix}
g + \theta(g) & \gamma (g - \theta(g)) \\
\gamma^{-1} (g - \theta(g)) & g + \theta(g)
\end{bmatrix}
\,:\, g \in G
\,\right \} .
\end{equation}
The map that associates to $g\in G$ the corresponding matrix displayed in \eqref{eq-pos-group} is an isomorphism of algebraic groups from $G$ to $G_z$. The fixed group  $G_z^\sigma$ consists of those matrices   for which
\[
\begin{aligned}
\sigma (g) + \sigma (\theta(g)) & = g + \theta(g) \\
\sigma (g) - \sigma(\theta(g)) & =  g  -  \theta(g)  ,
\end{aligned}
\]
and solving the equations we find that $\sigma(g) = g$.  So $G_z^\sigma \cong G^\sigma$.

If $z< 0$,  and if $\gamma\in \R$ is a square root of $-z$, then
\begin{equation}
\label{eq-neg-group}
G_z = 
\left \{ \,
\frac 12 \begin{bmatrix}
g + \theta(g) &  i\gamma (g - \theta(g)) \\
-i \gamma^{-1} (g - \theta(g)) & g + \theta(g)
\end{bmatrix}
\,:\, g \in G
\,\right \} . 
\end{equation}
The fixed group  $G_z^\sigma$ in this case consists of those matrices   for which
\[
\begin{aligned}
\sigma (g) + \sigma (\theta(g)) & = g + \theta(g) \\
\sigma (g) - \sigma(\theta(g)) & =  -(g  -  \theta(g)) ,
\end{aligned}
\]
and solving these equations we find that $\sigma(g) = \theta(g)$, or in other words $\sigma(\theta(g)) = g$.  So $G_z^\sigma \cong G^{\sigma\theta}$.

The groups $G_0$ and $G_\infty$ are given explicitly in Definition~\ref{def-special-fibers}, and it is clear that 
\[
G_0^\sigma \cong G_\infty ^\sigma \cong \left (G^\sigma \cap G^\theta\right ) \ltimes  \left (  \fg^{\sigma} \cap  \fg^{-\theta}    \right )  ,
\]
as required.\end{proof}
Theorem ~\ref{thm-real-fmly-groups1} leads to the following result:

\begin{theorem}
\label{thm-two-real-forms}
Let $G$ be a  complex   affine algebraic group and let $\sigma_1$ and $\sigma_2$ be commuting antiholomorphic involutions of $G$.   There is an  algebraic family of complex algebraic groups $\BG$ over $\C\P^1$ and a real structure on $\BG$ associated with the standard real structure on $\C\P^1$ that has fixed fibers
\begin{eqnarray}\nonumber
&&\BG^{\boldsymbol{\sigma}}|_{[\alpha:\beta]}\simeq \begin{cases}
G^{\sigma_1}  & \alpha\beta> 0\\
 \left ( G^{\sigma_1}\cap G^{\sigma_2}\right )  \ltimes \left ( \fg^{\sigma_1}\cap\fg^{-\sigma_2}\right )  &  \alpha\beta=0 \\
{G}^{\sigma_2}  & \alpha\beta< 0 .
\end{cases}
\end{eqnarray}
\end{theorem}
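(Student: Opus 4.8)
The plan is to deduce Theorem~\ref{thm-two-real-forms} from Theorem~\ref{thm-real-fmly-groups1} by the same reindexing trick used at the Lie algebra level just after Theorem~\ref{thm-real-fmly-algebras1}. Given commuting antiholomorphic involutions $\sigma_1,\sigma_2$ of $G$, I set
\[
\theta := \sigma_1\sigma_2, \qquad \sigma := \sigma_1 .
\]
Since $\sigma_1$ and $\sigma_2$ commute and each is an involution, $\theta = \sigma_1\sigma_2$ is a composition of two commuting antiholomorphic involutions, hence a \emph{holomorphic} (complex-linear) automorphism of $G$, and $\theta^2 = \sigma_1\sigma_2\sigma_1\sigma_2 = \sigma_1^2\sigma_2^2 = \mathrm{id}$, so $\theta$ is an algebraic involution. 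Moreover $\sigma\theta = \sigma_1\sigma_1\sigma_2 = \sigma_2$, and $\sigma$ commutes with $\theta$ because $\sigma_1$ commutes with both $\sigma_1$ and $\sigma_2$. So the pair $(\theta,\sigma)$ satisfies the hypotheses of Theorem~\ref{thm-real-fmly-groups1}.

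Next I apply Theorem~\ref{thm-real-fmly-groups1} to this pair, obtaining an algebraic family $\BG$ of affine algebraic groups over $\C\P^1$ together with a real structure $\boldsymbol\sigma$ compatible with the standard real structure on $\C\P^1$, whose fixed fibers over $\R\P^1$ are $G^\sigma$ when $\alpha\beta>0$, $\left(G^\sigma\cap G^\theta\right)\ltimes\left(\fg^\sigma\cap\fg^{-\theta}\right)$ when $\alpha\beta=0$, and $G^{\sigma\theta}$ when $\alpha\beta<0$. It remains to translate each of these three expressions back into the $\sigma_1,\sigma_2$ language. For $\alpha\beta>0$ this is immediate: $G^\sigma = G^{\sigma_1}$. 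For $\alpha\beta<0$: $G^{\sigma\theta} = G^{\sigma_2}$ since $\sigma\theta = \sigma_2$. For the central fiber I must check that $G^\sigma\cap G^\theta = G^{\sigma_1}\cap G^{\sigma_2}$ and $\fg^\sigma\cap\fg^{-\theta} = \fg^{\sigma_1}\cap\fg^{-\sigma_2}$. The group identity follows because on the common fixed set $\sigma_1 = \mathrm{id}$, so $\theta = \sigma_1\sigma_2$ acts as $\sigma_2$ there, whence $g$ is fixed by $\theta$ iff fixed by $\sigma_2$; conversely if $\sigma_1(g)=g$ and $\sigma_2(g)=g$ then $\theta(g)=\sigma_1\sigma_2(g)=g$. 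The Lie algebra identity is the same computation: if $\sigma_1(X)=X$ then $\theta(X)=\sigma_1\sigma_2(X)=\sigma_2(X)$, so $\theta(X)=-X$ iff $\sigma_2(X)=-X$, which gives $\fg^\sigma\cap\fg^{-\theta}=\fg^{\sigma_1}\cap\fg^{-\sigma_2}$.

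There is no serious obstacle here; the only point requiring a moment's care is confirming that $\theta=\sigma_1\sigma_2$ really is complex-linear and algebraic rather than merely a set-theoretic automorphism — but this is the standard fact that the product of two commuting conjugate-linear involutions is a complex-linear involution (used already in the Lie algebra discussion following Theorem~\ref{thm-real-fmly-algebras1}), and algebraicity of $\theta$ is inherited from algebraicity of $\sigma_1$ and $\sigma_2$ as maps of varieties compatible with conjugation. Assembling the three fiber computations with the family and real structure produced by Theorem~\ref{thm-real-fmly-groups1} gives exactly the stated isomorphism types, completing the proof.
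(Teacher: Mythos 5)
Your proposal is correct and is exactly the derivation the paper intends: the statement is presented as a corollary of Theorem~\ref{thm-real-fmly-groups1} via the substitution $\theta=\sigma_1\sigma_2$, $\sigma=\sigma_1$, precisely as spelled out for the Lie algebra analogue following Theorem~\ref{thm-real-fmly-algebras1}. Your verifications that $\theta$ is an algebraic (complex-linear) involution commuting with $\sigma$, and the translations $G^{\sigma\theta}=G^{\sigma_2}$, $G^{\sigma}\cap G^{\theta}=G^{\sigma_1}\cap G^{\sigma_2}$, and $\fg^{\sigma}\cap\fg^{-\theta}=\fg^{\sigma_1}\cap\fg^{-\sigma_2}$ are all accurate.
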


\begin{example}
Starting with $G=GL(n,\C)$,  set  
\[
\theta(A)=J_\theta AJ_\theta \quad \text{and} \quad \sigma(A)=J_\sigma A^{*\,-1} J_\sigma, 
\]
where  
\[
 J_\theta=\operatorname{diag}( I_{p},I_d, I_q)
 \quad \text{and} \quad 
 J_\sigma=\operatorname{diag}(  I_{p{+}d},-I_{q}) ,
\]
and where $p{+}q{+}d=n$. The corresponding family of real groups satisfies
\[
\BG^{{\boldsymbol{\sigma}}}|_{[\alpha:\beta]} \cong 
\begin{cases}
 U(p{+}d,q), & \alpha \beta> 0\\
 U(p,d{+}q), & \alpha \beta< 0 ,
\end{cases}
\]
and $\BG^{{\boldsymbol{\sigma}}}|_{[1:0]}\cong \BG^{{\boldsymbol{\sigma}}}|_{[0:1]}$ is isomorphic to the semi-direct  product of the group $U(p,q){\times}U(d)$ with the additive abelian group of $ (p{+}q){\times}d$ complex matrices, where the action of $U(p,q)$ is through ordinary matrix multiplication on the left, and the action of $U(d)$ is through matrix multiplication by the adjoint on the right.
\end{example}

\section{More on the algebraic family of groups}
\label{sec-details-on-group-family}

In this section we shall verify that our construction does indeed produce an algebraic family of groups over $\C\P^1$ \cite[Section 2.2]{Ber2016}, or in other words that produces a smooth group scheme.   The main idea is to carry out computations in the analytic topology, and then argue that the desired results in the Zariski topology follow from general principles.

\begin{lemma}
\label{lem-analytic-closure}
The subset 
\[
\bigsqcup_{[\alpha{:}\beta]\in \C\P^1} \{ [\alpha{:}\beta]\} \times G_{\alpha/\beta} \subseteq   
\C\P^1\times SL(2n, \C)
\]
 is closed in the analytic topology.  In addition it is the closure in $ \C\P^1\times SL(2n, \C)$ of its intersection with   $\C^{*}\times SL(2n, \C)$.
\end{lemma}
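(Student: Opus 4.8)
The plan is to exhibit the subset explicitly as the image of a nice map, and then to describe its closure in terms of a degeneration as $z\to 0$ and $z\to\infty$. First I would work in the chart $\beta\ne 0$, where the point is $[z{:}1]$ with $z\in\C$. Using the description in \eqref{eq-pos-group}–\eqref{eq-neg-group} (equivalently Definition~\ref{def-standard-fibers}), the fiber $G_z$ for $z\ne 0$ is the image of the polynomial map
\[
\phi_z \colon G \longrightarrow SL(2n,\C),\qquad
g\longmapsto
A\begin{bmatrix} g & 0 \\ 0 & \theta(g)\end{bmatrix}A^{-1}\ \text{conjugated by } d(z^{1/2}),
\]
which in matrix entries depends algebraically on $g$ and on $z, z^{-1}$. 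Concretely, for $g\in G$ the matrix $d(z^{1/2})G_1d(z^{-1/2})$ has $(1,1)$- and $(2,2)$-blocks $\tfrac12(g+\theta(g))$, $(1,2)$-block $\tfrac12 z\,(g-\theta(g))$ and $(2,1)$-block $\tfrac12 (g-\theta(g))$ — note that writing it this way (using $d(z)$ rather than $d(z^{1/2})$ on one side) removes the square root, so the assignment $(z,g)\mapsto$ (that matrix) is a genuine morphism $\C\times G\to SL(2n,\C)$, whose restriction to $\C^*\times G$ has image $\bigsqcup_{z\ne 0}\{[z{:}1]\}\times G_z$. Its value at $z=0$ lands in $G_0$ of Definition~\ref{def-special-fibers}, and as $g$ ranges over $G$ one gets all of $G_0$: indeed $g\in G^\theta$ forces the off-diagonal blocks to vanish, and $X\in\fg^{-\theta}$ is recovered by differentiating, or more simply one checks surjectivity onto $G_0$ directly from the block form since $G^\theta$ is the diagonal part and $\{g-\theta(g):g\in G\}$ fills out $g\cdot\fg^{-\theta}$ near the identity and hence, by the group structure of $G_0$, everywhere. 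The analogous morphism in the chart $\alpha\ne 0$ (replace $z$ by $w=1/z$, and $G_z$ by $d(w^{-1/2})G_1 d(w^{1/2})$, i.e.\ swap the roles of the two off-diagonal blocks) extends the family across $[1{:}0]$ with fiber $G_\infty$.

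The key step is then: the subset in question is exactly $\Phi(\C\P^1\times G)$, where $\Phi\colon \C\P^1\times G\to \C\P^1\times SL(2n,\C)$ is the morphism just described (glued from the two chart formulas, which agree on $\C^*$ since both compute $\{[z{:}1]\}\times G_z$). Since $\C\P^1\times G$ is $\ldots$ well, $G$ need not be compact, so $\Phi$ is not automatically closed; but here is the point: $\C\P^1$ is compact, and for the closure statement what we need is only that the fibers $G_z$ vary \emph{continuously} in the analytic topology and that the two special fibers $G_0,G_\infty$ are the Hausdorff limits of $G_z$ as $z\to 0$, $z\to\infty$. Continuity of $z\mapsto G_z$ (as a subset of $SL(2n,\C)$) for $z\in\C^*$ is immediate because $G_z=d(z^{1/2})G_1 d(z^{-1/2})$ depends continuously on $z$. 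That $G_0=\lim_{z\to 0}G_z$ in the sense of Hausdorff limits (equivalently: every point of $G_0$ is a limit of points $g_z\in G_z$, and conversely every limit of such points lies in $G_0$) is the content one must check by hand: given $g\in G^\theta$ and $X\in\fg^{-\theta}$, pick a smooth path $g_t\in G$ with $g_0=g$ and $\dot g_0 = gX$ — possible since $gX\in g\cdot\fg^{-\theta}$ and $\fg^{-\theta}=T_e(\text{the }\theta\text{-anti-fixed part})$ — and then the matrix of Definition~\ref{def-standard-fibers} evaluated at $z$ and at $g_{\sqrt z}$ (say) converges, as $z\to 0^+$, to $\left[\begin{smallmatrix} g&0\\ gX&g\end{smallmatrix}\right]$; the reverse inclusion, that any convergent sequence from the $G_z$'s has limit in $G_0$, follows because the $(1,2)$-block is forced to $0$ in the limit (it carries a factor $z$) and $G_0$ is visibly closed. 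The same argument at $z=\infty$ handles $G_\infty$.

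Assembling: the displayed subset is closed in the analytic topology because (i) over the open set $\C^*\subset\C\P^1$ it is a continuously varying family of closed subvarieties, hence locally closed with closure obtained by adjoining the fiberwise Hausdorff limits at the two missing points, and (ii) those limits are precisely $G_0$ and $G_\infty$, which are therefore contained in the set — so the set equals its own analytic closure. The second assertion, that the whole set is the analytic closure of its intersection with $\C^*\times SL(2n,\C)$, is then automatic from the same limit computation, since $G_0$ and $G_\infty$ were shown to consist entirely of limits of points $(z,g_z)$ with $z\in\C^*$. I expect the main obstacle to be the reverse direction of the Hausdorff-limit computation at $z=0,\infty$ — i.e.\ verifying that \emph{no extra points} appear in the limit beyond $G_0,G_\infty$; this requires a small uniformity/compactness argument controlling the $(2,1)$- (resp.\ $(1,2)$-) blocks as $z\to 0$, using that the diagonal blocks $\tfrac12(g_z+\theta(g_z))$ stay in (a compact piece of) $G^\theta$ along any convergent subsequence, so that the off-diagonal data is pinned down and the limit is forced into the stated semidirect-product form.
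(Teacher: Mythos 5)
There are two problems here, one fixable and one that is the actual content of the lemma. First, your ``square-root-free'' formula is wrong: the set of matrices with diagonal blocks $\tfrac12(g+\theta(g))$, $(1,2)$-block $\tfrac12 z(g-\theta(g))$ and $(2,1)$-block $\tfrac12(g-\theta(g))$ is $d(z^{1/2})G_z d(z^{-1/2})=G_{z^2}$, not $G_z$, so your $\Phi$ does not have the displayed set as its image fiberwise. Worse, at $z=0$ your formula produces matrices whose diagonal block is $\tfrac12(g+\theta(g))$, which for general $g$ is not even an element of $G$ (it is a sum of group elements), so $\Phi(\{0\}\times G)$ is not contained in $G_0$ and the ``key step'' that the whole family is $\Phi(\C\P^1\times G)$ is false. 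The clean fix is the paper's: parametrize over the double cover $w\mapsto[w^2{:}1]$ via $(w,g)\mapsto d(w)G_1d(w)^{-1}$, accept that this morphism does not extend over $w=0$, and compute the closure at the special points by hand.

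Second, and decisively: the reverse inclusion at $[0{:}1]$ and $[1{:}0]$ --- that a convergent sequence of points of $G_{z_n}$ with $z_n\to 0$ has its limit in $G_0$ and not merely in some larger set --- is the entire substance of the lemma, and you defer it with ``this requires a small uniformity/compactness argument.'' It is not small, and ``the off-diagonal data is pinned down'' is not an argument. Given that $\tfrac12 z_n^{-1}(g_n-\theta(g_n))\to k$, one easily gets $g_n\to g\in G^\theta$ and that the $(1,2)$-block tends to $0$; the hard point is to show $k\in g\cdot\fg^{-\theta}$ rather than being an arbitrary matrix. The paper's proof does exactly this: write $g_n=g\exp(X_n)$ with $X_n\to 0$, use a second-order estimate on $\exp$ to bound $\|(2z_n)^{-1}(X_n-\theta(X_n))\|$ by the (convergent, hence bounded) norms $\|(2z_n)^{-1}(g_n-\theta(g_n))\|$, extract a subsequential limit $W\in\fg^{-\theta}$, and then use a Campbell--Baker--Hausdorff estimate to identify $k=gW$. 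None of this appears in your proposal, so the crux of the lemma is unproved. The forward inclusion (every point of $G_0$ is a limit, via the curve $g\exp(tX)$) and the closedness over $\C^*$ are handled correctly, but those are the easy parts.
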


\begin{proof}
The intersection of the displayed subset with $\C^{*}\times SL(2n, \C)$ is certainly closed in $\C^{*}\times SL(2n, \C)$.  So we just need to show that the closure in $\C\P^1\times SL(2n, \C)$ includes in addition   the fibers  $\{[0{:}1]\} \times G_{0} $ and $\{[1{:}0]\} \times G_{\infty} $, and nothing more.  

If $g\in G^\theta$ and $X\in \fg^{-\theta}$, then the element
\[
 \frac 12
\begin{bmatrix}
g\exp(zX)+ g \exp(-zX) &  z (g\exp(zX)- g \exp(-zX)) \\
z^{-1}(g\exp(zX)- g \exp(-zX)) & g\exp(zX)+ g \exp(-zX)
\end{bmatrix}
\]
in the fiber over $[z^2{:}1]$ converges to 
$\left [ 
\begin{smallmatrix}
 g & 0 \\ gX & g
 \end{smallmatrix}
\right ]$
 as $z\to 0$. So the fiber   $\{[0{:}1]\} \times G_{0} $  is included in the closure, and similarly so is the fiber at $\infty$.
 
We shall now show that the closure includes nothing more over the point $[0{:}1]$ (the case of the point $[1{:}0]$ is similar and will be omitted).  Suppose that  $z_n\to 0$ and that
\begin{equation}
\label{eq-limit-fmla}
\frac 12 
\lim_{n\to \infty}
\begin{bmatrix} 
g_n + \theta(g_n) & z_n( g_n - \theta(g_n)) \\
z_n^{-1}( g_n - \theta(g_n)) & g_n + \theta(g_n) 
\end{bmatrix} 
=
\begin{bmatrix}
g & h \\ k & g
\end{bmatrix} .
\end{equation}
 Writing 
\[
g_n - \theta(g_n) = z_n \cdot z_n^{-1} (g_n - \theta(g_n))
\]
we find that $g_n - \theta(g_n) \to 0$.  As a result, 
$
  \lim_{n\to \infty} g_n  = g$ and 
$ \theta(g) = g $. In addition 
\[
h = \lim_{n\to \infty} z_n^2 \cdot z_n^{-1} (g_n - \theta(g_n)) =0.
\]
So it  remains to show that  the matrix  $k$ in \eqref{eq-limit-fmla} has the form $gW$ for some $W\in \fg^{-\theta}$. 

If $n$ is sufficiently large, then we can write 
\[
g _n = g \exp (X_n)
\]
where $X_n \to 0$.  By elementary calculus, there is a constant $C_1> 0$ such that  
\begin{equation}
\label{eq-exp-estimate}
\| \exp(X) - (I+X)\| \le C_1 \| X\|^2
\end{equation}
for all $X$ sufficiently close to $0$.   We obtain from this  that 
\[
 \| X -\theta (X) \| <  C_2\| \exp(X) - \exp(\theta(X)) \|
\]
for  some $C_2>0$ and all $X$ sufficiently close to $0$,
and from this we get  that
\begin{equation}
\label{eq-an-estimate}
\|  \frac 1{2z_n} (X_n -\theta (X_n)) \| < C_3 \|\frac 1{2z_n} ( g_n - \theta (g_n)) \| 
\end{equation}
for some $C_3>0$ and all $n$ sufficiently large.  On the right-hand side of \eqref{eq-an-estimate} appear the norms of a convergent sequence of matrices. So the terms $(2z_n)^{-1} (X_n - \theta (X_n))$ that appear on the left-hand side constitute a bounded sequence, and passing to a subsequence we may assume this sequence converges, say
\begin{equation}
\label{eq-limit-is-w}
\lim_{n\to \infty}    (2z_n)^{-1} (X_n - \theta (X_n)) = W.
\end{equation}
Of course $W\in \fg^{-\theta}$.

It follows from the Campbell-Baker-Hausdorff formula that there is a constant $C_3 > 0$ such that if $Y$ and $Z$ are any complex $n{\times}n$ matrices sufficiently close to $0$, then
\begin{equation}
\label{eq-cbh}
\| \exp (Y+Z) -  \exp (Y)\exp(Z)\|  \le C_4 \|Y\|\cdot \| Z\| .
\end{equation}
Now write 
\[
Y_n   =\tfrac 12 (X_n + \theta (X_n)) \quad \text{and} \quad 
 Z_n  = \tfrac 12 (X_n -\theta (X_n))   .
 \]
 Applying \eqref{eq-cbh} we get 
 \begin{multline*}
 \|
 g\bigl (\exp(X_n) -  \exp(\theta(X_n))\bigr ) - 
 g\exp(Y_n)\bigl (\exp(Z_n) -  \exp(-Z_n)\bigr ) 
 \|
  \\
 \le 
 C_4 \| Y_n\| \|Z_n\|
 \end{multline*}
 for some $C_4>0$ and all large $n$.  Multiplying by $(2z_n)^{-1}$ and applying \eqref{eq-an-estimate} we see that 
 \begin{multline*}
\lim_{n\to \infty} (2z_n)^{-1}  g\bigl (\exp(X_n) -  \exp(\theta(X_n))\bigr ) 
\\
= 
\lim_{n\to \infty}   (2z_n)^{-1} g\exp(Y_n)\bigl (\exp(Z_n) -  \exp(-Z_n)\bigr ) .
 \end{multline*}
Remembering the definition of $Z_n$, we see that the  second limit is $gW$ by \eqref{eq-exp-estimate} and \eqref{eq-limit-is-w}.
\end{proof}

\begin{lemma}
\label{lem-smooth}
The set 
\[
\bigsqcup_{[\alpha{:}\beta]\in \C\P^1} \{ [\alpha{:}\beta]\} \times G_{\alpha/\beta} \subseteq  \C\P^1\times SL(2n, \C)
\]
 is a complex submanifold, and the projection from it to $\C\P^1$ is a submersion.
\end{lemma}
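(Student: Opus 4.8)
The statement of Lemma~\ref{lem-smooth} asks for two things: that the set
\[
\mathcal S = \bigsqcup_{[\alpha{:}\beta]\in\C\P^1}\{[\alpha{:}\beta]\}\times G_{\alpha/\beta}\subseteq \C\P^1\times SL(2n,\C)
\]
is a complex submanifold, and that the projection $\pi\colon\mathcal S\to\C\P^1$ is a submersion. Since $\C\P^1$ is covered by the two affine charts $\C\P^1\setminus\{[1{:}0]\}$ and $\C\P^1\setminus\{[0{:}1]\}$, and these charts are interchanged by the involution $z\mapsto z^{-1}$ together with conjugation by $\left[\begin{smallmatrix}0&I\\I&0\end{smallmatrix}\right]$ (which maps $G_z$ to $G_{1/z}$), it suffices to treat the chart $\C_z := \C\P^1\setminus\{[1{:}0]\}$, with coordinate $z$, and to show that $\mathcal S\cap(\C_z\times SL(2n,\C))$ is a submanifold submersing onto $\C_z$. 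Over the open locus $z\ne 0$ this is already immediate from Proposition~\ref{prop-C-star-family}: there $\mathcal S$ is cut out as $\{z\}\times G_z = \{z\}\times d(z^{1/2})G_1 d(z^{-1/2})$, and the map $(z,g)\mapsto (z, d(z^{1/2})\,\iota_1(g)\,d(z^{-1/2}))$, where $\iota_1\colon G\to G_1$ is the embedding \eqref{eq-embdg-fmla2}, is a biholomorphism from $\C^*\times G$ onto this part of $\mathcal S$ over which the projection is visibly a submersion. So the entire content is the behaviour near the fiber over $z=0$.

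\textbf{Parametrization near $z=0$.} The plan is to produce an explicit holomorphic parametrization of a neighbourhood of an arbitrary point of the central fiber $\{0\}\times G_0$. Fix a point $\left[\begin{smallmatrix}g_0&0\\g_0X_0&g_0\end{smallmatrix}\right]\in G_0$, with $g_0\in G^\theta$ and $X_0\in\fg^{-\theta}$. Using the decomposition $\fg=\fg^\theta\oplus\fg^{-\theta}$, write a general element of $G$ near $g_0$ as $g=g_0\exp(Y)\exp(Z)$ with $Y\in\fg^\theta$, $Z\in\fg^{-\theta}$ near $0$ (this is a local diffeomorphism by the inverse function theorem, since $\fg^\theta$ and $\fg^{-\theta}$ span $\fg$). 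For $(z,Y,Z)$ near $(0,0,X_0)$ define
\[
\Phi(z,Y,Z) = \Bigl(z,\ \tfrac12\begin{bmatrix} g+\theta(g) & z(g-\theta(g))\\ z^{-1}(g-\theta(g)) & g+\theta(g)\end{bmatrix}\Bigr),\qquad g=g_0\exp(Y)\exp(zZ).
\]
The key point is that the substitution $Z\mapsto zZ$ cancels the apparent pole: from \eqref{eq-exp-estimate}, \eqref{eq-cbh} one has $g-\theta(g) = g_0\bigl(\exp(Y)\exp(zZ)-\exp(Y)\exp(-zZ)\bigr) + \bigl(\text{error}\bigr)$, and a Campbell--Baker--Hausdorff expansion shows $z^{-1}(g-\theta(g))$ extends to a holomorphic function of $(z,Y,Z)$ near $(0,0,X_0)$ taking the value $g_0\exp(Y)(2Z)$ at $z=0$ — in particular equal to $2g_0X_0$ at $(0,0,X_0)$, up to the factor $2$ one can absorb by rescaling $Z$. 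Thus $\Phi$ is holomorphic on a neighbourhood of $(0,0,X_0)$ in $\C\times\fg^\theta\times\fg^{-\theta}$, its image lies in $\mathcal S$ (over $z\ne0$ it is the displayed form of $G_z$, and over $z=0$ it lands in $G_0$ by the computation in Lemma~\ref{lem-analytic-closure}), and it surjects onto a neighbourhood of the chosen point in $\mathcal S$ by Lemma~\ref{lem-analytic-closure}.

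\textbf{Immersivity and conclusion.} It then remains to check that the derivative $d\Phi$ at $(0,0,X_0)$ — equivalently at every nearby point, by continuity — is injective, so that $\Phi$ is a holomorphic immersion and hence (shrinking if necessary, and using that $\Phi$ is a local homeomorphism onto its image) a biholomorphism from a neighbourhood in $\C\times\fg^\theta\times\fg^{-\theta}$ onto a neighbourhood of the point in $\mathcal S$; this exhibits $\mathcal S$ as a complex submanifold of dimension $1+\dim\fg$ near the central fiber, and since $\pi\circ\Phi$ is the first-coordinate projection $(z,Y,Z)\mapsto z$, the projection $\pi$ is a submersion there. For the injectivity of $d\Phi$: the $z$-component of $\Phi$ is just $z$, so it suffices to show that for fixed $z$ near $0$ the map $(Y,Z)\mapsto g=g_0\exp(Y)\exp(zZ)$ composed with \eqref{eq-embdg-fmla2} is an immersion in $(Y,Z)$; since \eqref{eq-embdg-fmla2} is a closed embedding of $G$ it is enough that $(Y,Z)\mapsto g_0\exp(Y)\exp(zZ)$ be an immersion into $G$, which at $z\ne 0$ is clear (the differential hits $g_0(\fg^\theta + \operatorname{Ad}(\exp Y)\fg^{-\theta}) = g_0\fg$, a $\dim\fg$-dimensional space), and at $z=0$ requires a direct look: there the relevant map is $(Y,Z)\mapsto(g_0\exp(Y),\ g_0\exp(Y)(2Z))$ into $G_0\cong G^\theta\times\fg^{-\theta}$, whose differential at $(0,X_0)$ is $(\dot Y,\dot Z)\mapsto (g_0\dot Y,\ g_0(2\dot Z + 2X_0\dot Y\,\text{-terms}))$, visibly injective. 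The main obstacle is the bookkeeping in this last step — controlling, via CBH, the precise form of the holomorphic extension of $z^{-1}(g-\theta(g))$ and its derivative at $z=0$ so that the immersivity computation comes out cleanly; but this is the same mechanism already deployed in the proof of Lemma~\ref{lem-analytic-closure}, now upgraded from an estimate to an identification of a holomorphic Taylor coefficient. Finally, the two charts patch because on the overlap $z\ne 0$ both give the manifold structure of Proposition~\ref{prop-C-star-family}, completing the proof.
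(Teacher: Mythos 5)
Your overall strategy---reduce to the affine chart around $z=0$ and build an explicit holomorphic chart for the family near a point of the central fiber by rescaling the $\fg^{-\theta}$-coordinate so that the apparent pole in the lower-left block cancels---is genuinely different from the paper's proof, which simply observes that the Lie algebras $\fg_{[\alpha:\beta]}$ form a complex analytic subbundle of the trivial bundle with fiber $\mathfrak{sl}(2n,\C)$ and then invokes the elementary theory of the exponential map. An explicit chart construction would be a perfectly legitimate alternative, but as written yours contains a concrete error, and it sits exactly at the delicate point of the lemma.

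The error: by Definition~\ref{def-standard-fibers}, $G_z = d(z^{1/2})\,G_1\,d(z^{-1/2})$, so the elements of $G_z$ are the matrices $\tfrac12\left[\begin{smallmatrix} g+\theta(g) & \gamma(g-\theta(g)) \\ \gamma^{-1}(g-\theta(g)) & g+\theta(g)\end{smallmatrix}\right]$ with $\gamma^2 = z$ (this is \eqref{eq-pos-group}); the matrix you display, with off-diagonal factors $z$ and $z^{-1}$, lies in $G_{z^2}$, not $G_z$ (compare the proof of Lemma~\ref{lem-analytic-closure}, where precisely that matrix is placed in the fiber over $[z^2{:}1]$). So your $\Phi$ does not take values in $\mathcal S$. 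If you repair this by sending $(w,Y,Z)$ to the point of $\{[w^2{:}1]\}\times G_{w^2}$ built from $g=g_0\exp(Y)\exp(wZ)$, the map does land in $\mathcal S$, is holomorphic, and covers a neighbourhood of $p$; but it is $2$-to-$1$ (via $(w,Y,Z)\mapsto(-w,Y,-Z)$) and composes with $\pi$ to give $w\mapsto w^2$, so it certifies neither that $\mathcal S$ is a submanifold nor that $\pi$ is a submersion. The honest fix is to set $g=g_0\exp(Y)\exp(z^{1/2}Z)$ and check that every matrix entry of the resulting element of $G_z$---for instance
\[
z^{-1/2}\bigl(\exp(z^{1/2}Z)-\exp(-z^{1/2}Z)\bigr)=2Z+\tfrac{2}{3!}\,zZ^{3}+\cdots
\]
---is an \emph{even} power series in $z^{1/2}$, hence holomorphic in $z$ and independent of the choice of branch. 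This evenness (equivalently, descent of the trivialized family on the double cover $w\mapsto w^{2}=z$ through the deck transformation $(w,g)\mapsto(-w,\theta(g))$) is the real content of smoothness at $z=0$, and your write-up skips it. The remaining steps you outline (immersivity at $z=0$, surjectivity onto a neighbourhood via the estimates in the proof of Lemma~\ref{lem-analytic-closure}, and patching the two charts by the swap matrix) are sound once this is in place.
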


\begin{proof}
This follows from the fact that the Lie algebras of the $G_z$ form a complex analytic subbundle of the trivial bundle with fiber $\mathfrak{sl}(2n,\C)$, and from the elementary theory of the exponential map.
\end{proof}

\begin{proposition}[{\cite[Proposition 7, p.12]{Serre55}}]
\label{prop-serre}
The analytic closure of the image of a regular map between complex affine varieties is Zariski closed. \qed
\end{proposition}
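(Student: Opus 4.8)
The statement is the classical comparison between the Zariski and analytic topologies for images of morphisms, and the plan is to derive it from Chevalley's constructibility theorem together with one elementary fact relating the two topologies on an irreducible variety. Throughout, write $\overline{(\cdot)}^{\,\mathrm{Zar}}$ and $\overline{(\cdot)}^{\,\mathrm{an}}$ for the closures in the Zariski and analytic topologies respectively; since the Zariski topology is coarser than the analytic one, every Zariski-closed set is analytically closed, so $\overline{A}^{\,\mathrm{an}}\subseteq\overline{A}^{\,\mathrm{Zar}}$ for any subset $A$. It therefore suffices to prove the reverse inclusion when $A$ is the image $f(X)$ of a regular map $f\colon X\to Y$ of complex affine varieties.

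First I would invoke Chevalley's theorem: the image $f(X)$ is a constructible subset of $Y$. Let $Z_1,\dots,Z_k$ be the irreducible components of the Zariski-closed set $\overline{f(X)}^{\,\mathrm{Zar}}$. Since $f(X)$ is Zariski-dense in this set and is constructible, each intersection $f(X)\cap Z_i$ is a constructible subset of $Z_i$ that is Zariski-dense in the irreducible variety $Z_i$; hence it contains a nonempty Zariski-open subset $U_i$ of $Z_i$, because a constructible set in an irreducible variety either is contained in a proper closed subset or contains a nonempty open subset.

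The key point is then the following lemma: a nonempty Zariski-open subset $U$ of an irreducible complex variety $Z$ is dense for the analytic topology. To prove it I would first reduce to the smooth locus $Z_{\mathrm{sm}}$: it is a nonempty Zariski-open subset of $Z$, it is analytically dense in $Z$ (its complement is locally the zero set of regular functions, which cannot vanish on a nonempty analytic open set), and as a complex manifold it is connected because $Z$ is irreducible. Now $U\cap Z_{\mathrm{sm}}$ is the complement in the connected complex manifold $Z_{\mathrm{sm}}$ of the proper analytic subset $Z_{\mathrm{sm}}\setminus U$, and the complement of a proper analytic subvariety in a connected complex manifold is connected and analytically dense. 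Hence $U$ is analytically dense in $Z_{\mathrm{sm}}$, and therefore in $Z$.

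Finally I would assemble the pieces: applying the lemma to each $U_i\subseteq Z_i$ gives $\overline{U_i}^{\,\mathrm{an}}=Z_i$, so
\[
\overline{f(X)}^{\,\mathrm{Zar}}=\bigcup_{i=1}^{k}Z_i=\bigcup_{i=1}^{k}\overline{U_i}^{\,\mathrm{an}}\subseteq\overline{f(X)}^{\,\mathrm{an}},
\]
using that $U_i\subseteq f(X)$ and that finite unions commute with closure. Combining with the trivial inclusion $\overline{f(X)}^{\,\mathrm{an}}\subseteq\overline{f(X)}^{\,\mathrm{Zar}}$ yields equality, so the analytic closure of $f(X)$ is Zariski-closed. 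The main obstacle is the lemma — specifically the input that deleting a proper analytic subvariety from a connected complex manifold leaves it connected; this is where genuine complex-analytic geometry (the local branched-covering structure of analytic sets, or elementary properties of zero loci of holomorphic functions) is needed, everything else being formal.
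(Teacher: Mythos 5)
The paper does not actually prove this proposition: it is quoted verbatim from Serre's GAGA (Proposition 7, p.~12), whose content is precisely that the analytic and Zariski closures of a \emph{constructible} subset of a variety coincide, combined implicitly with Chevalley's theorem that images of regular maps are constructible. Your argument is therefore not checkable against a proof in the paper, but it is a correct reconstruction of the standard one, and it follows essentially the same route as Serre: reduce via Chevalley and the irreducible decomposition of $\overline{f(X)}^{\,\mathrm{Zar}}$ to the lemma that a nonempty Zariski-open subset of an irreducible variety is analytically dense. The decomposition step is fine (each $f(X)\cap Z_i$ is Zariski-dense in $Z_i$ because no $Z_i$ is contained in the union of the others, and a dense constructible set contains a dense open set), and the final assembly is fine. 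The one thin spot is the parenthetical justification that the smooth locus is analytically dense in $Z$: the claim that a nonzero regular function on an irreducible $Z$ ``cannot vanish on a nonempty analytic open set'' is exactly an identity theorem \emph{at singular points}, and it does not follow from the smooth-manifold picture you use afterwards. One genuinely needs an input such as the equidimensionality of the analytic germs of an irreducible variety (so that $Z_{\mathrm{sing}}$, having strictly smaller dimension, cannot contain an analytic neighborhood in $Z$), or alternatively an induction on dimension via Noether normalization, which is how Serre and Mumford handle it. You correctly flag the connectedness of the complement of an analytic set as the nontrivial analytic ingredient; this density of $Z_{\mathrm{sm}}$ is a second such ingredient of the same nature, and as written its justification is mildly circular. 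Modulo that standard fact, the proof is complete and correct.
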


\begin{proposition}[{\cite[p.\ 13]{Milnor68}}]
\label{prop-milnor}
If a complex affine variety in $\C^n$ is a complex submanifold of $\C^n$, then it is nonsingular as a variety. \qed
\end{proposition}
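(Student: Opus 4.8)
The statement is local, so the plan is to fix a point $p\in V$ and show that the algebraic local ring $\mathcal{O}_{V,p}$ is regular, which is exactly the assertion that $V$ is nonsingular at $p$. The whole difficulty is that the hypothesis is analytic (a submanifold structure) while the conclusion is algebraic, so I would bridge the two by passing to formal completions, where the distinction disappears. Concretely, I would invoke the comparison between the algebraic local ring $\mathcal{O}_{V,p}$ and the analytic local ring $\mathcal{O}_{V,p}^{\mathrm{an}}$ of germs of holomorphic functions on $V$ at $p$: the natural local homomorphism $\mathcal{O}_{V,p}\to\mathcal{O}_{V,p}^{\mathrm{an}}$ is faithfully flat and induces an isomorphism of $\mathfrak{m}$-adic completions, $\widehat{\mathcal{O}_{V,p}}\cong\widehat{\mathcal{O}_{V,p}^{\mathrm{an}}}$.

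Granting this comparison, the argument is short. Because $V$ is a complex submanifold of $\C^n$ near $p$, of some dimension $d$, a holomorphic submanifold chart identifies $\mathcal{O}_{V,p}^{\mathrm{an}}$ with the ring $\C\{z_1,\dots,z_d\}$ of convergent power series in $d$ variables, whose completion is the formal power series ring $\C[[z_1,\dots,z_d]]$. Hence $\widehat{\mathcal{O}_{V,p}}\cong\C[[z_1,\dots,z_d]]$, which is a regular local ring. Since a Noetherian local ring is regular precisely when its completion is regular, $\mathcal{O}_{V,p}$ is regular. Equivalently, one reads off from the common completion that both the Krull dimension and the cotangent-space dimension $\dim_\C\mathfrak{m}_p/\mathfrak{m}_p^2$ of $\mathcal{O}_{V,p}$ equal $d$, so that $\dim_p V=d$ and the embedding dimension agrees with the dimension; thus $V$ is nonsingular at $p$. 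As $p$ was arbitrary, $V$ is nonsingular.

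The main obstacle, and indeed the only substantial input, is the comparison isomorphism $\widehat{\mathcal{O}_{V,p}}\cong\widehat{\mathcal{O}_{V,p}^{\mathrm{an}}}$; this is a standard fact, which I would justify by checking that $\mathcal{O}_{V,p}\to\mathcal{O}_{V,p}^{\mathrm{an}}$ induces isomorphisms on every graded piece $\mathfrak{m}^k/\mathfrak{m}^{k+1}$, or by presenting both completions as $\C[[z_1,\dots,z_n]]/\widehat{I}$ for the same ideal $I$. I would also point out why coarser tools do not suffice, in order to locate exactly where the submanifold hypothesis is used. The evident inclusion of the analytic tangent space into the Zariski tangent space only yields $\dim_\C T_p^{\mathrm{Zar}}V\ge d$, which is the wrong inequality; and the tangent cone can be a genuine linear space of the correct dimension at a singular point, as for the cusp $y^2=x^3$, which is accordingly not a submanifold. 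The submanifold hypothesis enters solely through the regularity of $\mathcal{O}_{V,p}^{\mathrm{an}}$, and it is the completion comparison that transports this regularity to the algebraic local ring.
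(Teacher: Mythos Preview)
Your argument is correct. The paper, however, offers no proof of this proposition: the statement is followed immediately by the end-of-proof symbol, and the result is simply quoted from Milnor's \emph{Singular Points of Complex Hypersurfaces}. So there is no argument in the paper to compare yours against; you have supplied a genuine proof where the authors were content with a citation. The completion route you take---transporting regularity from $\mathcal{O}_{V,p}^{\mathrm{an}}$ to $\mathcal{O}_{V,p}$ via the isomorphism $\widehat{\mathcal{O}_{V,p}}\cong\widehat{\mathcal{O}_{V,p}^{\mathrm{an}}}$---is a standard and clean way to establish the result, and your identification of that isomorphism as the one substantive input is accurate.
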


\begin{proof}[Proof of Proposition~\ref{prop-C-star-family}]
Consider the regular map
\[
\C^{*} \times G_1 \longrightarrow \C^* \times SL (2n, \C)
\]
defined by 
\[
(w, g) \longmapsto \bigl (w^2, d(w) g d(w)^{-1} \bigr)
\]
The image is
\begin{equation}
\label{eq-subset-that-is-closed}
\bigsqcup _{z\in \C^*} \{ [z{:}1] \} \times G_z .
\end{equation}
This is closed in the analytic topology, and therefore also in the Zariski topology, by Proposition~\ref{prop-serre}.  The image  is smooth (indeed complex)   submanifold, and therefore also a nonsingular subvariety by Proposition~\ref{prop-milnor}.  The projection to $\C^*$ is a submersion of smooth manifolds, and therefore also a smooth morphism (see for example \cite[Proposition 10.4]{Hartshorne1977}).
\end{proof}

\begin{proof}[Proof of Proposition~\ref{prop-find-special-fibers}]
Recall that $\BG$ is the Zariski closure of the set \eqref{eq-subset-that-is-closed} in $\C\P^1{\times}SL(2n,\C)$. 
By Proposition~\ref{prop-C-star-family}, whose proof we just completed,  the set \eqref{eq-subset-that-is-closed} is already closed in   $\C^* \times SL (2n, \C)$, so when $\alpha/\beta \ne 0,\infty$ the fiber of $\BG$ over $[\alpha{:}\beta]$ is $G_{\alpha/\beta}$, as required.  By Proposition~\ref{prop-serre} the Zariski closure is the analytic closure, so the computation of the fibers over the remaining two points of $\C\P^1$ is handled by Lemma~\ref{lem-analytic-closure}.\end{proof}

\begin{proof}[Proof of Proposition~\ref{prop-alg-family-of-groups}]
According to Lemma~\ref{lem-smooth},  $\BG$ is a smooth submanifold of  $\C\P^{1} \times SL (2n, \C)$, and so by Proposition~\ref{prop-milnor} it is a nonsingular subvariety.  The projection to $\C\P^1$ is once again a submersion of smooth manifolds, and therefore also a smooth morphism.
\end{proof}

\bibliography{references}

\def\cprime{$'$}
\begin{thebibliography}{ABV92}

\bibitem[ABV92]{ABV}
Jeffrey Adams, Dan Barbasch, and David~A. Vogan, Jr.
\newblock {\em The {L}anglands classification and irreducible characters for
  real reductive groups}, volume 104 of {\em Progress in Mathematics}.
\newblock Birkh\"auser Boston, Inc., Boston, MA, 1992.

\bibitem[Afg15]{Afgoustidis15}
Alexandre Afgoustidis.
\newblock How tempered representations of a semisimple {L}ie group contract to
  its {C}artan motion group.
\newblock Preprint, 2015.
\newblock \href{https://arxiv.org/abs/1510.02650}{arXiv:1510.02650}.

\bibitem[AG08]{AizenbudGourevitch08}
Avraham Aizenbud and Dmitry Gourevitch.
\newblock Schwartz functions on {N}ash manifolds.
\newblock {\em Int. Math. Res. Not. IMRN}, (5):Art. ID rnm 155, 37, 2008.

\bibitem[Ber14]{Bernstein14}
Joseph Bernstein.
\newblock Stacks in representation theory. {W}hat is a continuous
  representation of an algebraic group?
\newblock Preprint, 2014.
\newblock \href{https://arxiv.org/abs/1410.0435}{arXiv:1410.0435}.

\bibitem[BHS16]{Ber2016}
Joseph Bernstein, Nigel Higson, and Eyal~M. Subag.
\newblock Algebraic families of {H}arish-{C}handra pairs.
\newblock Preprint, 2016.
\newblock \href{https://arxiv.org/abs/1610.03435}{arXiv:1610.03435}.

\bibitem[BHS17]{Ber2017}
Joseph Bernstein, Nigel Higson, and Eyal~M. Subag.
\newblock Contractions of representations and algebraic families of
  {H}arish-{C}handra modules.
\newblock Preprint, 2017.
\newblock \href{https://arxiv.org/abs/1703.04028}{arXiv:1703.04028}.

\bibitem[BK14]{BernsteinKroetz2014}
Joseph Bernstein and Bernhard Kr{\"o}tz.
\newblock Smooth {F}r\'echet globalizations of {H}arish-{C}handra modules.
\newblock {\em Israel J. Math.}, 199(1):45--111, 2014.

\bibitem[DR83]{Dooley83}
Anthony~H. Dooley and John~W. Rice.
\newblock Contractions of rotation groups and their representations.
\newblock {\em Math. Proc. Cambridge Philos. Soc.}, 94(3):509--517, 1983.

\bibitem[DR85]{Dooley85}
Anthony~H. Dooley and John~W. Rice.
\newblock On contractions of semisimple {L}ie groups.
\newblock {\em Trans. Amer. Math. Soc.}, 289(1):185--202, 1985.

\bibitem[FJ78]{Flensted-Jensen}
Mogens Flensted-Jensen.
\newblock Spherical functions of a real semisimple {L}ie group. {A} method of
  reduction to the complex case.
\newblock {\em J. Funct. Anal.}, 30(1):106--146, 1978.

\bibitem[Har77]{Hartshorne1977}
Robin Hartshorne.
\newblock {\em Algebraic geometry}.
\newblock Springer-Verlag, New York-Heidelberg, 1977.
\newblock Graduate Texts in Mathematics, No. 52.

\bibitem[Hel88]{Helm}
Aloysius~G. Helminck.
\newblock Algebraic groups with a commuting pair of involutions and semisimple
  symmetric spaces.
\newblock {\em Adv. in Math.}, 71(1):21--91, 1988.

\bibitem[Hel01]{Hel}
Sigurdur Helgason.
\newblock {\em Differential geometry, {L}ie groups, and symmetric spaces},
  volume~34 of {\em Graduate Studies in Mathematics}.
\newblock American Mathematical Society, Providence, RI, 2001.
\newblock Corrected reprint of the 1978 original.

\bibitem[Hig08]{Higson08}
Nigel Higson.
\newblock The {M}ackey analogy and {$K$}-theory.
\newblock In {\em Group representations, ergodic theory, and mathematical
  physics: a tribute to {G}eorge {W}. {M}ackey}, volume 449 of {\em Contemp.
  Math.}, pages 149--172. Amer. Math. Soc., Providence, RI, 2008.

\bibitem[Hig11]{Higson11}
Nigel Higson.
\newblock On the analogy between complex semisimple groups and their {C}artan
  motion groups.
\newblock In {\em Noncommutative geometry and global analysis}, volume 546 of
  {\em Contemp. Math.}, pages 137--170. Amer. Math. Soc., Providence, RI, 2011.

\bibitem[HS01]{HelmSchw}
Aloysius~G. Helminck and Gerald~W. Schwarz.
\newblock Orbits and invariants associated with a pair of commuting
  involutions.
\newblock {\em Duke Math. J.}, 106(2):237--279, 2001.

\bibitem[IW53]{Inonu-Wigner53}
Erdal Inonu and Eugene~P. Wigner.
\newblock On the contraction of groups and their representations.
\newblock {\em Proc. Nat. Acad. Sci. U. S. A.}, 39:510--524, 1953.

\bibitem[Loo69]{Loos}
Ottmar Loos.
\newblock {\em Symmetric spaces, I, II}.
\newblock W. A. Benjamin, 1969.

\bibitem[Mac75]{Mackey75}
George~W. Mackey.
\newblock On the analogy between semisimple {L}ie groups and certain related
  semi-direct product groups.
\newblock In {\em Lie groups and their representations ({P}roc. {S}ummer
  {S}chool, {B}olyai {J}\'anos {M}ath. {S}oc., {B}udapest, 1971)}, pages
  339--363. Halsted, New York, 1975.

\bibitem[Mil68]{Milnor68}
John Milnor.
\newblock {\em Singular points of complex hypersurfaces}.
\newblock Annals of Mathematics Studies, No. 61. Princeton University Press,
  Princeton, N.J.; University of Tokyo Press, Tokyo, 1968.

\bibitem[OV90]{OnishchikVinberg}
Arkadii~L. Onishchik and {\`Ernest}.~B. Vinberg.
\newblock {\em Lie groups and algebraic groups}.
\newblock Springer Series in Soviet Mathematics. Springer-Verlag, Berlin, 1990.
\newblock Translated from the Russian and with a preface by D. A. Leites.

\bibitem[Pan13]{Panyushev}
Dmitri~I. Panyushev.
\newblock Commuting involutions and degenerations of isotropy representations.
\newblock {\em Transform. Groups}, 18(2):507--537, 2013.

\bibitem[Ser56]{Serre55}
Jean-Pierre Serre.
\newblock G\'eom\'etrie alg\'ebrique et g\'eom\'etrie analytique.
\newblock {\em Ann. Inst. Fourier, Grenoble}, 6:1--42, 1955--1956.

\bibitem[Sub17]{Subag17}
Eyal~M. Subag.
\newblock The algebraic {M}ackey-{H}igson bijections.
\newblock Preprint, 2017.
\newblock \href{https://arxiv.org/abs/1706.05616}{arXiv:1706.05616}.

\bibitem[Vog82]{VoganDuality}
David~A. Vogan, Jr.
\newblock Irreducible characters of semisimple {L}ie groups. {IV}.
  {C}haracter-multiplicity duality.
\newblock {\em Duke Math. J.}, 49(4):943--1073, 1982.

\bibitem[Vog93]{Vogan93}
David~A. Vogan, Jr.
\newblock The local {L}anglands conjecture.
\newblock In {\em Representation theory of groups and algebras}, volume 145 of
  {\em Contemp. Math.}, pages 305--379. Amer. Math. Soc., Providence, RI, 1993.

\end{thebibliography}
\bibliographystyle{alpha}
 
\end{document}